%% LyX 2.0.6 created this file.  For more info, see http://www.lyx.org/.
%% Do not edit unless you really know what you are doing.
\documentclass[english]{article}
\usepackage[T1]{fontenc}
\usepackage[latin9]{inputenc}
\usepackage{babel}
\usepackage{amsthm}
\usepackage{amsmath}
\usepackage{amssymb}
\usepackage[authoryear]{natbib}
\usepackage{subscript}
\usepackage[unicode=true]
 {hyperref}
\usepackage{breakurl}

\makeatletter
%%%%%%%%%%%%%%%%%%%%%%%%%%%%%% Textclass specific LaTeX commands.
  \theoremstyle{definition}
  \newtheorem{defn}{\protect\definitionname}
  \theoremstyle{plain}
  \newtheorem{lem}{\protect\lemmaname}
 \ifx\proof\undefined\
   \newenvironment{proof}[1][\proofname]{\par
     \normalfont\topsep6\p@\@plus6\p@\relax
     \trivlist
     \itemindent\parindent
     \item[\hskip\labelsep
           \scshape
       #1]\ignorespaces
   }{%
     \endtrivlist\@endpefalse
   }
   \providecommand{\proofname}{Proof}
 \fi
  \theoremstyle{plain}
  \newtheorem{cor}{\protect\corollaryname}
  \theoremstyle{plain}
  \newtheorem{prop}{\protect\propositionname}

\makeatother

\providecommand{\corollaryname}{Corollary}
\providecommand{\definitionname}{Definition}
\providecommand{\lemmaname}{Lemma}
\providecommand{\propositionname}{Proposition}

\begin{document}

\title{Coherent prior distributions in univariate finite mixture and Markov-switching
models}

\author{\L{}ukasz Kwiatkowski%
\thanks{Department of Econometrics and Operations Research, Cracow University
of Economics, Cracow, Poland, \protect\href{mailto:kwiatkol@uek.krakow.pl}{kwiatkol@uek.krakow.pl}%
}}
\maketitle
\begin{abstract}
Finite mixture and Markov-switching models generalize and, therefore,
nest specifications featuring only one component. While specifying
priors in the two: the general (mixture) model and its special (single-component)
case, it may be desirable to ensure that the prior assumptions introduced
into both structures are coherent in the sense that the prior distribution
in the nested model amounts to the conditional prior in the mixture
model under relevant parametric restriction. The study provides the
rudiments of setting coherent priors in Bayesian univariate finite
mixture and Markov-switching models. Once some primary results are
delivered, we derive specific conditions for coherence in the case
of three types of continuous priors commonly engaged in Bayesian modeling:
the normal, inverse gamma, and gamma distributions. Further, we study
the consequences of introducing additional constraints into the mixture
model's prior (such as the ones enforcing identifiability or some
sort of regularity, e.g. second-order stationarity) on the coherence
conditions. Finally, the methodology is illustrated through a discussion
of setting coherent priors for a class of Markov-switching AR(2) models.
\end{abstract}
\textbf{Keywords:} Bayesian inference, prior coherence, prior compatibility,
mixture models, Markov switching, exponential family.

\section{Introduction\label{sec:Introduction}}

Consider two statistical models, say, \emph{M\textsubscript{G}} and
\emph{M\textsubscript{R},} such that the latter constitutes a special
case of the former under some parametric restriction, and let vectors
$\theta^{(G)}$ and $\theta^{(R)}$ collect their parameters, respectively.
Note that $\theta^{(G)}$ includes $\theta^{(R)}$, which thereby
is the vector of common parameters (as opposed to the vector of\emph{
M\textsubscript{G}}'s specific coefficients, say, $\gamma$, so that
$\theta^{(G)}=(\theta^{(R)\prime}\;\gamma^{\prime})^{\prime}$, $A^{\prime}$
symbolizing the transpose of any matrix \emph{A}). Let $\gamma_{0}$
be the value of $\gamma$ under which \emph{M\textsubscript{G}} collapses
to \emph{M\textsubscript{R}}. In what follows, we adopt notational
convention under which, generally, $\pi_{\underline{\omega}}(\omega|M)$
denotes the p.d.f. of some random variable $\underline{\omega}$ at
$\underline{\omega}=\omega$ under model \emph{M}. Analogously, $\pi_{\underline{\omega}|\underline{\gamma}}(\omega|\gamma,M)$
stands for the p.d.f. of $\underline{\omega}$'s conditional distribution
at $\underline{\omega}=\omega$ given $\underline{\gamma}=\gamma$.
Finally, to avoid measure-theoretic intricacies, though with some
abuse of notation, we use the above symbols of density functions to
refer to the underlying distributions as well.

Within a non-Bayesian statistical framework, that \emph{M\textsubscript{G}}
nests \emph{M\textsubscript{R} }amounts to the equality of corresponding
sample distributions under the nesting constraint, i.e., $\pi_{\underline{y}|\underline{\theta^{(R)}}}(y|\theta^{(R)},M_{R})=\pi_{\underline{y}|\underline{\theta^{(R)}},\underline{\gamma}}(y|\theta^{(R)},\gamma=\gamma_{0},M_{G})$
for any $y\in Y\subseteq\mathbb{R}^{T}$. However, should the models
in question be regarded Bayesian, then nesting \emph{M\textsubscript{R}
}in \emph{M\textsubscript{G}} would also require that the prior information
introduced in the former be ``nested'' in the one incorporated into
the general structure. It follows then that $\pi_{\underline{\theta^{(R)}}}(\theta^{(R)}|M_{R})$
should be induced from $\pi_{\underline{\theta^{(G)}}}(\theta^{(G)}|M_{G})$
via conditioning upon the reducing restriction. The definition below
formalizes the concept of such prior coherence.
\begin{defn}
[Prior coherence]\emph{If the prior distributions: $\pi_{\underline{\theta^{(R)}}}(\theta^{(R)}|M_{R})$
and $\pi_{\underline{\theta^{(G)}}}(\theta^{(G)}|M_{G})$, satisfy
the condition:\label{Definition1}}
\begin{equation}
\pi_{\underline{\theta^{(R)}}}(\theta^{(R)}|M_{R})=\pi_{\underline{\theta^{(R)}}|\underline{\gamma}}(\theta^{(R)}|\gamma=\gamma_{0},M_{G}),\label{eq:1}
\end{equation}
\emph{then they are called coherent, and the models $M_{G}$ and $M_{R}$
are said to feature coherent prior structures.}
\end{defn}
Note that if $\theta^{(R)}$ and $\gamma$ in the $M_{G}$ model are
\emph{a priori} independent, then it is required for the prior coherence
that the prior of $\theta^{(R)}$ be the same in both models, i.e.,
$\pi_{\underline{\theta^{(R)}}}(\theta^{(R)}|M_{R})=\pi_{\underline{\theta^{(R)}}}(\theta^{(R)}|M_{G})$. 

The idea of specifying coherent prior distributions has been originated
by \citet{Dickey:1974:PriorCoherence} and \citet{Poirier:1985:PriorCoherence}
in the context of hypothesis testing within linear models. We refer
the reader to \citet{ConsonniVeronese:2008:PriorCompatibility} for
a recent study and literature review on various forms of prior compatibility
across linear models.

Obviously, the idea of establishing coherent prior structures over
various models does not pertain to the class of the linear specifications
solely, but applies whenever the nesting comes into play. In particular,
the mixture (and Markov-switching) models nest their single-component
counterparts, the latter being derived from the former via relevant
equality restrictions. Perversely, one may argue, however, that there
is no compelling reason within the subjective framework to relate
priors across models, since they express subjective opinions conditionally
on a different state of information. Nevertheless, ensuring prior
coherence across various model specifications appears crucial to the
model comparison (usually performed via recognizably prior-sensitive
Bayes factors), for reconciling the models' prior structures sheds
some layer of arbitrariness (\citealt{DawidLauritzen:2001:PriorCompatibility},
\citealt{ConsonniVeronese:2008:PriorCompatibility}). In particular,
within the finite mixture and Markov-switching class of models, specifying
coherent priors may be desirable for testing the relevance of incorporating
the mixture (switching) structure into the otherwise single-component
specification. To the author's best knowledge, the issue of prior
compatibility within the mixture models has not been raised in the
literature so far. Therefore, in the present research, we take an
interest in settling coherent prior structures for the mixture models
(and the Markov-switching structures alike) and their single-component
counterparts. 

In Section \ref{sec:Prior-coherence-in} we lay the basic foundations
of establishing coherent prior structures within the finite mixture
and the Markov-switching model frameworks, and arrive at the basic
lemma. The results incline us to focus next on exponential families
of prior distributions, for three representatives of which, namely
the normal, inverse gamma, and gamma distributions, we derive in \ref{sec:Specific-results}
explicit conditions relating the hyperparameters of the general and
the nested model. Section \ref{sec:Coherence-of-constrained} is devoted
to the cases in which the priors are subject to certain restrictions,
such as the ones enforcing identifiability of the mixture components
(via an inequality constraint imposed on a group of mixture parameters)
or some sort of regularity (e.g., the second-order stationarity).
Finally, in Section \ref{sec:Example}, the methodology is illustrated
with a discussion of setting coherent priors for a class of Markov-switching
AR(2) models.

\section{Prior coherence in the mixture and Markov-switching models\label{sec:Prior-coherence-in}}

Consider a single-component model, $M_{1}$, with parameters collected
in
\[
\theta^{(1)}=(\delta^{\prime}\;\lambda_{1,1}\;\lambda_{1,2}\;\ldots\;\lambda_{1,n})^{\prime}\in\Theta^{(1)},\qquad n\in\mathbb{N},
\]
and the general, \emph{K}-component mixture model, $M_{K}$, $K\in\mathbb{N}$,
with parameters
\[
\theta^{(K)}=(\delta^{\prime}\;\lambda_{1}^{(K)\prime}\;\lambda_{2}^{(K)\prime}\;\ldots\;\lambda_{n}^{(K)\prime}\;\eta^{\prime})^{\prime}\in\Theta^{(K)}.
\]
The following remarks clarify our notational convetion:
\begin{itemize}
\item The vector $\delta$ is comprised of the parameters that are non-mixture
and common to both models.
\item The parameters $\lambda_{1,j}$ $(j=1,2,\ldots,n)$ in the model $M_{1}$
are scalar, with the first subscript indicating that the model features
a single component.
\item Each vector $\lambda_{j}^{(K)}$ $(j=1,2,\ldots,n)$ in the model
$M_{K}$ collects \emph{K} parameters that arise as a result of introducing
the \emph{K}-component mixture structure into the corresponding (scalar)
parameter $\lambda_{1,j}$ in $M_{1}$, so that $\lambda_{j}^{(K)}=(\lambda_{1,j}\;\lambda_{2,j}\;\ldots\;\lambda_{K,j})^{\prime}$.
Note that the first coordinate in $\lambda_{j}^{(K)}$, denoted by
$\lambda_{1,j}$, coincides with the corresponding parameter in the
single-component model.
\item The vector $\eta=(\eta_{1}\;\eta_{2}\;\ldots\;\eta_{K})^{\prime}$
in the model $M_{K}$ contains the probability parameters:

\begin{itemize}
\item If $M_{K}$ is a finite mixture model, then $\eta_{i}$ $(i=1,2,\ldots,K)$
are the mixture probabilities, and $\eta\in\Delta^{(K-1)}$, where
$\Delta^{(K-1)}$ denotes the unit (\emph{K}-1)-simplex.
\item If $M_{K}$ is a Markov-switching model, with $\{S_{t};\; t=0,1,...\}$
forming the underlying \emph{K}-state (homogenous) Markov chain, then
$\eta_{i}$ $(i=1,2,\ldots,K)$ are the rows of transition matrix
$P=[\eta_{ij}]_{i,j=1,2,\ldots,K}$, $\eta_{ij}\equiv\Pr{(S_{t}=j|S_{t-1}=i)}$,
i.e., $\eta_{i}=(\eta_{i1}\;\eta_{i2}\;\ldots\;\eta_{iK})\in\Delta^{(K-1)}$,
and therefore $\eta\in(\Delta^{(K-1)})^{K}$. For simplicity, though
without loss of generality, we assume that the chain's initial state
distribution: $\xi=(\xi_{1}\;\xi_{2}\;\ldots\;\xi_{K})^{\prime}$,
$\xi_{i}\equiv\Pr{(S_{0}=i)}$, $i=1,2,\ldots,K$, is either known
(e.g., a uniform distribution) or equal to the chain's ergodic distribution
(which introduces into the probabilities $\xi_{i}$'s conditioning
upon the transition matrix, $\xi_{i}\equiv\Pr{(S_{0}=i|P)}$).
\end{itemize}
\end{itemize}
Notice that the two: $M_{1}$ and $M_{K}$, represent the extremes,
i.e, at one end, there is the single-component model $M_{1},$ whereas
at the other - the specification $M_{K}$, in which all $\lambda_{j}^{(K)}$'s
constitute the mixture counterparts of $\lambda_{1,j}$'s in $M_{1}$.
Obviously, there are $2^{n}-2$ specifications in between, such that
only some of $\lambda_{j}^{(K)}$'s are actually the vectors of mixture
parameters, whereas the other ones remain equivalent to the corresponding
coefficients in the single-component model. These ``intermediate''
model structures encompass $M_{1}$ on the one hand, and, on the other,
are nested within the most general one, i.e., $M_{K}$. Nevertheless,
we limit most of our further considerations only to the two extreme
cases, for the reason that, under the assumptions of our analysis,
establishing coherent priors for the two: the single-component model
and any of the ``intermediate'' constructions, comes down to the
same framework by means of relegating those $\lambda_{1,j}$'s that
are non-mixture in both specifications to the vector of the common
parameters, $\delta$. In a similar fashion, coherence of the ``intermediate''
and the general model can be settled, which would require including
also the probabilities $\eta$ in the common parameters vector. We
revisit the issue in the final paragraph of Section \ref{sec:Example}.

In what follows, for both models in question, prior independence is
assumed between the parameter vector's components:
\begin{equation}
\pi_{\underline{\theta^{(1)}}}(\theta^{(1)}|M_{1})=\pi_{\underline{\delta}}(\delta|M_{1})\overset{n}{\underset{j=1}{\prod}}\pi_{\underline{\lambda_{1,j}}}(\lambda_{1,j}|M_{1}),\label{eq:2}
\end{equation}
\begin{equation}
\pi_{\underline{\theta^{(K)}}}(\theta^{(K)}|M_{K})=\pi_{\underline{\delta}}(\delta|M_{K})\left[\overset{n}{\underset{j=1}{\prod}}\pi_{\underline{\lambda_{j}^{(K)}}}(\lambda_{j}^{(K)}|M_{K})\right]\pi_{\underline{\eta}}(\eta|M_{K}).\label{eq:3}
\end{equation}
Moreover, for each $j=1,2,\ldots,n$, also the individual coordinates
of $\lambda_{j}^{(K)}$ are presumed \emph{a priori }independent:
\begin{equation}
\pi_{\underline{\lambda_{j}^{(K)}}}(\lambda_{j}^{(K)}|M_{K})=\overset{K}{\underset{i=1}{\prod}}\pi_{\underline{\lambda_{i,j}}}(\lambda_{i,j}|M_{K}).\label{eq:4}
\end{equation}
Finally, all the\textbf{ }priors under consideration are assumed proper,
for it may be shown that setting improper priors in mixture models
yields improper posteriors (see \citealt{RoederWasserman:1997:MixtureOfNormals},
\citealt{FruhwirthSchnatter:2006MixtureAndMS}). 

Resting upon (\ref{eq:2}) and (\ref{eq:3}), the priors: $\pi_{\underline{\theta^{(1)}}}(\theta^{(1)}|M_{1})$
and $\pi_{\underline{\theta^{(K)}}}(\theta^{(K)}|M_{K})$, are coherent
if the two conditions are met simultaneously:
\begin{equation}
\pi_{\underline{\delta}}(\delta|M_{1})=\pi_{\underline{\delta}}(\delta|M_{K})\label{eq:5}
\end{equation}
and, for all $j=1,2,\ldots,n$,
\begin{equation}
\pi_{\underline{\lambda_{1,j}}}(\lambda_{1,j}|M_{1})\propto\pi_{\underline{\lambda_{j}^{(K)}}}(\lambda_{j}^{(K)}|\lambda_{1,j}\equiv\lambda_{2,j}=\lambda_{3,j}=\ldots=\lambda_{K,j},M_{K}).\label{eq:6}
\end{equation}
Note that the postulated conditions do not explicitly concern the
mixture probabilities $(\eta)$, as these are either entirely absent
from the reduced model $(M_{1})$ or contained in the vector $\delta$
(in the case of establishing coherent prior structures for the general
and some ``intermediate'' model specification; then, (\ref{eq:5})
applies).

In order to rewrite (\ref{eq:6}) in terms of Definition \ref{Definition1},
the general model needs to be suitably reparametrized. Let $\widetilde{M}_{K}$
be the reparametrized model, with the parameters grouped in
\[
\widetilde{\theta}^{(K)}=(\delta^{\prime}\;\widetilde{\lambda}_{1}^{(K)\prime}\;\widetilde{\lambda}_{2}^{(K)\prime}\;\ldots\;\widetilde{\lambda}_{n}^{(K)\prime}\;\eta^{\prime})^{\prime}\in\widetilde{\Theta}^{(K)}.
\]
Each $\widetilde{\lambda}_{j}^{(K)}$ $(j=1,2,\ldots,n)$ is obtained
from the corresponding $\lambda_{j}^{(K)}$ via a transformation $g:\mathbb{{R}}^{K}\rightarrow\mathbb{{R}}^{K}$:
\[
\widetilde{\lambda}_{j}^{(K)}=g(\lambda_{j}^{(K)})=\begin{pmatrix}g_{1}(\lambda_{1,j})\\
g_{2}(\lambda_{2,j})\\
g_{3}(\lambda_{3,j})\\
\vdots\\
g_{K}(\lambda_{K,j})
\end{pmatrix}=\begin{pmatrix}\lambda_{1,j}\\
\lambda_{2,j}-\lambda_{1,j}\\
\lambda_{3,j}-\lambda_{1,j}\\
\vdots\\
\lambda_{K,j}-\lambda_{1,j}
\end{pmatrix}\equiv\begin{pmatrix}\lambda_{1,j}\\
\tau_{j}
\end{pmatrix},
\]
with $\tau_{j}=(\tau_{2,j}\;\tau_{3,j}\;\ldots\;\tau_{K,j})^{\prime}$
collecting the contrasts $\tau_{i,j}=\lambda_{i,j}-\lambda_{1,j}$
$(i=2,3,...,K)$. The inverse transformation follows as
\[
g^{-1}(\widetilde{\lambda}_{j}^{(K)})=\begin{pmatrix}g_{1}^{-1}(\lambda_{1,j})\\
g_{2}^{-1}(\lambda_{2,j})\\
g_{3}^{-1}(\lambda_{3,j})\\
\vdots\\
g_{K}^{-1}(\lambda_{K,j})
\end{pmatrix}=\begin{pmatrix}\lambda_{1,j}\\
\tau_{2,j}+\lambda_{1,j}\\
\tau_{3,j}+\lambda_{1,j}\\
\vdots\\
\tau_{K,j}+\lambda_{1,j}
\end{pmatrix}=\begin{pmatrix}\lambda_{1,j}\\
\lambda_{2,j}\\
\lambda_{3,j}\\
\vdots\\
\lambda_{K,j}
\end{pmatrix}\equiv\lambda_{j}^{(K)}.
\]
Owing to the fact that $\left|\frac{\partial g^{-1}(\widetilde{\lambda}_{j}^{(K)})}{\partial\widetilde{\lambda}_{j}^{(K)}}\right|=1$,
the p.d.f. of $\widetilde{\lambda}_{j}^{(K)}$'s prior can be easily
derived:
\begin{align}
\pi_{\underline{\widetilde{\lambda}_{j}^{(K)}}}(\widetilde{\lambda}_{j}^{(K)}|\widetilde{M}_{K}) & =\pi_{\underline{\lambda_{j}^{(K)}}}\left(g^{-1}(\widetilde{\lambda}_{j}^{(K)})|M_{K}\right)\nonumber \\
 & =\pi_{\underline{\lambda_{1,j}}}\left(g_{1}^{-1}(\lambda_{1,j})|M_{K}\right)\overset{K}{\underset{i=2}{\prod}}\pi_{\underline{\lambda_{i,j}}}\left(g_{i}^{-1}(\tau_{i,j})|M_{K}\right)\label{eq:7}\\
 & =\pi_{\underline{\lambda_{1,j}}}\left(\lambda_{1,j}|M_{K}\right)\overset{K}{\underset{i=2}{\prod}}\pi_{\underline{\lambda_{i,j}}}\left(\tau_{i,j}+\lambda_{1,j}|M_{K}\right).\nonumber 
\end{align}

Now, recall that $M_{1}$ results from $M_{K}$ under the equality
constraint of all the coordinates within each vector $\lambda_{j}^{(K)}$,
i.e.,
\begin{equation}
\lambda_{1,j}=\lambda_{2,j}=\ldots=\lambda_{K,j},\qquad j=1,2,\ldots n.\label{eq:8}
\end{equation}
 In the model $\widetilde{M}_{K}$, (\ref{eq:8}) is equivalent to
setting all the corresponding contrasts to zero:
\[
\lambda_{1,j}=\lambda_{2,j}=\ldots=\lambda_{K,j}\Leftrightarrow\tau_{2,j}=\tau_{3,j}=\ldots=\tau_{K,j}=0\Leftrightarrow\tau_{j}=0_{[(K-1)\times1]}.
\]
Conditions (\ref{eq:5}) and (\ref{eq:6}) can now be restated in
terms of the reparametrized model:
\begin{equation}
\pi_{\underline{\delta}}(\delta|M_{1})=\pi_{\underline{\delta}}(\delta|\widetilde{M}_{K}),\label{eq:9}
\end{equation}
and
\begin{equation}
\pi_{\underline{\lambda_{1,j}}}(\lambda_{1,j}|M_{1})=\pi_{\underline{\lambda_{1,j}}|\underline{\tau_{j}}}(\lambda_{1,j}|\tau_{j}=0_{[(K-1)\times1]},\widetilde{M}_{K})\label{eq:10}
\end{equation}
for each $j=1,2,\ldots,n$. Note that the prior distribution of $\delta$
in $\widetilde{M}_{K}$, appearing on the right-hand side of (\ref{eq:9}),
is actually equal to $\pi_{\underline{\delta}}(\delta|M_{K})$, for
the transformation \emph{g} does not affect the parameters collected
in $\delta$.

We end this section by formulating our basic result in Lemma 1, with
Corollary 1 following immediately.
\begin{lem}
For a given $j\in\{1,2,\ldots,n\}$, the prior distribution of $\lambda_{j}^{(K)}$
under $M_{K}$, and the one of the corresponding parameter $\lambda_{1,j}$
under $M_{1}$ are coherent iff\label{lem:Lemma1}
\begin{equation}
\pi_{\underline{\lambda_{1,j}}}(\lambda_{1,j}|M_{1})\propto\overset{K}{\underset{i=1}{\prod}}\pi_{\underline{\lambda_{i,j}}}(\lambda_{1,j}|M_{K}).\label{eq:Lemma1_1}
\end{equation}
\end{lem}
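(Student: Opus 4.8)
The plan is to start from the reparametrized coherence requirement (\ref{eq:10}) and to render its right-hand side explicit by means of the joint prior (\ref{eq:7}) already obtained for $\widetilde{\lambda}_{j}^{(K)}=(\lambda_{1,j}\;\tau_{j}')'$. By the definition of a conditional density,
\[
\pi_{\underline{\lambda_{1,j}}|\underline{\tau_{j}}}(\lambda_{1,j}|\tau_{j}=0_{[(K-1)\times1]},\widetilde{M}_{K})=\frac{\pi_{\underline{\widetilde{\lambda}_{j}^{(K)}}}(\lambda_{1,j},0_{[(K-1)\times1]}|\widetilde{M}_{K})}{\pi_{\underline{\tau_{j}}}(\tau_{j}=0_{[(K-1)\times1]}|\widetilde{M}_{K})},
\]
and I would observe that the denominator, being the marginal of $\tau_{j}$ evaluated at the origin, is constant in $\lambda_{1,j}$ and therefore serves only as a normalizing factor.

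First I would compute the numerator by setting $\tau_{i,j}=0$ for every $i=2,\ldots,K$ in (\ref{eq:7}). Since each contrast enters solely through the argument $\tau_{i,j}+\lambda_{1,j}$, this substitution turns every such argument into $\lambda_{1,j}$, so that the leading factor together with the product over $i=2,\ldots,K$ collapses precisely to $\prod_{i=1}^{K}\pi_{\underline{\lambda_{i,j}}}(\lambda_{1,j}|M_{K})$. Hence, viewed as a function of $\lambda_{1,j}$, the conditional density is proportional to this product, the constant of proportionality being the reciprocal of $\pi_{\underline{\tau_{j}}}(\tau_{j}=0_{[(K-1)\times1]}|\widetilde{M}_{K})$.

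Substituting this into (\ref{eq:10}) delivers the proportionality (\ref{eq:Lemma1_1}) in the forward direction. For the converse I would invoke the standing propriety assumption: both $\pi_{\underline{\lambda_{1,j}}}(\cdot|M_{1})$ and the conditional density $\pi_{\underline{\lambda_{1,j}}|\underline{\tau_{j}}}(\cdot|\tau_{j}=0_{[(K-1)\times1]},\widetilde{M}_{K})$ integrate to unity, and two proper densities that are each proportional to the same integrable function must share the same normalizing constant, hence coincide. This is the one point that warrants care, as the claim is phrased with $\propto$ rather than $=$: the suppressed constant is innocuous exactly because propriety fixes it uniquely, which is what renders the proportionality (\ref{eq:Lemma1_1}) equivalent to the equality (\ref{eq:10}) defining coherence. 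The remaining manipulations---applying the unit Jacobian and recognizing the collapse of the product---are routine, so I expect no substantive obstacle beyond this normalization argument.
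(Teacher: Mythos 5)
Your proposal is correct and follows essentially the same route as the paper's proof: condition on $\tau_{j}=0_{[(K-1)\times1]}$ via (\ref{eq:10}), pass to the joint density (\ref{eq:7}) up to the $\lambda_{1,j}$-free marginal of $\tau_{j}$ at the origin, and collapse the product. Your explicit treatment of the converse direction --- that propriety pins down the normalizing constant and so makes the proportionality in (\ref{eq:Lemma1_1}) equivalent to the equality in (\ref{eq:10}) --- is a point the paper leaves implicit, and is a welcome addition.
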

\begin{proof}
Employing (\ref{eq:7}) and (\ref{eq:10}), we proceed as follows:
\begin{align*}
\pi_{\underline{\lambda_{1,j}}}(\lambda_{1,j}|M_{1}) & =\pi_{\underline{\lambda_{1,j}}|\underline{\tau_{j}}}(\lambda_{1,j}|\tau_{j}=0_{[(K-1)\times1]},\widetilde{M}_{K})\\
 & \propto\pi_{\underline{\lambda_{1,j}},\underline{\tau_{j}}}(\lambda_{1,j},\tau_{j}=0_{[(K-1)\times1]}|\widetilde{M}_{K})\\
 & \propto\pi_{\underline{\lambda_{1,j}}}(\lambda_{1,j}|M_{K})\left[\overset{K}{\underset{i=2}{\prod}}\pi_{\underline{\lambda_{i,j}}}(\lambda_{1,j}|M_{K})\right]\\
 & =\overset{K}{\underset{i=1}{\prod}}\pi_{\underline{\lambda_{i,j}}}(\lambda_{1,j}|M_{K}).
\end{align*}
\end{proof}
\begin{cor}
For a given $j\in\{1,2,\ldots,n\}$, if all densities $\pi_{\underline{\lambda_{i,j}}}(\cdot|M_{K})$
($i=1,2,\ldots,K$) are the same, i.e., $\pi_{\underline{\lambda_{i,j}}}(x|M_{K})=\pi_{\underline{\lambda_{1,j}}}(x|M_{K})$,
$x\in\mathbb{R}$, then (\ref{eq:Lemma1_1}) reduces to\label{cor:Cor1}
\begin{equation}
\pi_{\underline{\lambda_{1,j}}}(\lambda_{1,j}|M_{1})\propto\left[\pi_{\underline{\lambda_{1,j}}}(\lambda_{1,j}|M_{K})\right]^{K}.\label{eq:Lemma1_2}
\end{equation}

\end{cor}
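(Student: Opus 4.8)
The plan is to read off the claim as a direct specialization of Lemma \ref{lem:Lemma1}, so essentially no new argument is required. First I would recall the characterization established there: for the fixed index $j$, the priors of $\lambda_{j}^{(K)}$ under $M_{K}$ and of $\lambda_{1,j}$ under $M_{1}$ are coherent if and only if
\[
\pi_{\underline{\lambda_{1,j}}}(\lambda_{1,j}|M_{1})\propto\overset{K}{\underset{i=1}{\prod}}\pi_{\underline{\lambda_{i,j}}}(\lambda_{1,j}|M_{K}).
\]
Because this is an ``iff'', it suffices to simplify the right-hand side under the extra hypothesis of the corollary, and the equivalence then transfers verbatim to the simplified form.

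Next I would invoke the assumption that the $K$ component priors share a common density, i.e., $\pi_{\underline{\lambda_{i,j}}}(x|M_{K})=\pi_{\underline{\lambda_{1,j}}}(x|M_{K})$ for all $x\in\mathbb{R}$ and all $i=1,2,\ldots,K$. Evaluating at $x=\lambda_{1,j}$ replaces every factor in the product by the single value $\pi_{\underline{\lambda_{1,j}}}(\lambda_{1,j}|M_{K})$, so a product of $K$ identical factors collapses to $\left[\pi_{\underline{\lambda_{1,j}}}(\lambda_{1,j}|M_{K})\right]^{K}$, which is precisely (\ref{eq:Lemma1_2}). I do not expect any genuine obstacle here, as the result is an immediate substitution into Lemma \ref{lem:Lemma1}; the only point worth noting is that the (unspecified) proportionality constant is immaterial, being fixed by the requirement that the left-hand prior on $M_{1}$ integrate to one. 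Hence raising the common density to the $K$-th power and renormalizing yields a well-defined proper prior precisely when $\left[\pi_{\underline{\lambda_{1,j}}}(\cdot|M_{K})\right]^{K}$ is integrable, a condition automatically met for the proper exponential-family priors treated in the sequel.
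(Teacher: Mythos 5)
Your proposal is correct and matches the paper's intent exactly: the paper gives no separate proof, presenting the corollary as an immediate consequence of Lemma \ref{lem:Lemma1} obtained by substituting the common density into the product, which is precisely your argument. The additional remark on integrability of the $K$-th power is a sensible (if optional) observation that the paper leaves implicit.
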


\section{Specific results\label{sec:Specific-results}}

The relations presented in (\ref{eq:Lemma1_1}) and (\ref{eq:Lemma1_2})
may prompt one, quite instinctively, to consider some exponential
family for specyfing the prior densities of $\lambda_{i,j}$'s under
$M_{K}$, since such an approach would yield the same type of the
prior distribution for $\lambda_{1,j}$ under $M_{1}$. What remains
then is to determine the relationships between the hyperparameters
of all the relevant densities (belonging to a given exponential family).

In the subsections below we focus our attention on three exponential
families: the normal, inverse gamma, and gamma distributions, which,
for their property of (conditional) conjugacy, are commonly entertained
in Bayesian statistical modeling. In each case, we apply Lemma 1 and
Corollary 1 to derive explicit formulae relating the hyperparameters
of the general and the nested model. Throughout the section we fix
the index $j\in\{1,2,\ldots,n\}$, and, for the sake of transparency,
drop it from the notation (e.g., writing $\lambda_{i}$ instead of
$\lambda_{i,j}$).

\subsection{Normal priors\label{sub:Normal}}

The following proposition establishes the coherence conditions upon
the normality of $\lambda_{i}$'s in the mixture model.
\begin{prop}
Suppose that each $\lambda_{i}$ $(i=1,2,\ldots,K)$ under $M_{K}$
follows a univariate normal distribution with mean $m_{i}^{(K)}$
and variance $v_{i}^{(K)}$:\label{prop:Prop1}
\[
\pi_{\underline{\lambda_{i}}}(\lambda_{i}|M_{K})=f_{N}(\lambda_{i}|m_{i}^{(K)},v_{i}^{(K)}).
\]
Then, the coherent prior for $\lambda_{1}$ under $M_{1}$ is the
normal distribution with mean $m^{(1)}$ and variance $v^{(1)}$:
\[
\pi_{\underline{\lambda_{1}}}(\lambda_{1}|M_{1})=f_{N}(\lambda_{1}|m^{(1)},v^{(1)}),
\]
where
\begin{equation}
m^{(1)}=\frac{\overset{K}{\underset{i=1}{\sum}}\frac{m_{i}^{(K)}}{v_{i}^{(K)}}}{\overset{K}{\underset{i=1}{\sum}}\frac{1}{v_{i}^{(K)}}}\label{eq:Prop1_m-variance}
\end{equation}
and
\begin{equation}
v^{(1)}=\left(\overset{K}{\underset{i=1}{\sum}}\frac{1}{v_{i}^{(K)}}\right)^{-1}.\label{eq:Prop1_v-variance}
\end{equation}
Alternatively, under precision-parametrized normal densities, if
\[
\pi_{\underline{\lambda_{i}}}(\lambda_{i}|M_{K})=f_{N}\left(\lambda_{i}|m_{i}^{(K)},(\breve{v}_{i}^{(K)})^{-1}\right),\qquad\breve{v}_{i}^{(K)}\equiv(v_{i}^{(K)})^{-1},
\]
for each $i=1,2,\ldots,K$, then
\[
\pi_{\underline{\lambda_{1}}}(\lambda_{1}|M_{1})=f_{N}\left(\lambda_{1}|m^{(1)},(\breve{v}^{(1)})^{-1}\right),
\]
where
\begin{equation}
m^{(1)}=\frac{\overset{K}{\underset{i=1}{\sum}}\breve{v}_{i}^{(K)}m_{i}^{(K)}}{\overset{K}{\underset{i=1}{\sum}}\breve{v}_{i}^{(K)}}\label{eq:Prop1_m-precision}
\end{equation}
and
\begin{equation}
\breve{v}^{(1)}=\overset{K}{\underset{i=1}{\sum}}\breve{v}_{i}^{(K)}.\label{eq:Prop1_v-precision}
\end{equation}
\end{prop}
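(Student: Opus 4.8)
The plan is to invoke Lemma \ref{lem:Lemma1} directly and then reduce the whole statement to a completing-the-square computation for Gaussians. By (\ref{eq:Lemma1_1}), the coherent prior for $\lambda_1$ under $M_1$ is proportional to the product $\prod_{i=1}^{K}\pi_{\underline{\lambda_i}}(\lambda_1|M_K)$, where each factor is the normal density $f_N(\lambda_1|m_i^{(K)},v_i^{(K)})$ evaluated at the \emph{common} argument $\lambda_1$. First I would write each factor as $\propto\exp\left(-\frac{(\lambda_1-m_i^{(K)})^2}{2v_i^{(K)}}\right)$, discarding its normalizing constant, which is legitimate because Lemma \ref{lem:Lemma1} asserts only proportionality.

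Next I would multiply the $K$ factors, converting the product of exponentials into the exponential of the summed exponents, $-\frac{1}{2}\sum_{i=1}^{K}\frac{(\lambda_1-m_i^{(K)})^2}{v_i^{(K)}}$. Expanding the squares and collecting powers of $\lambda_1$ yields a quadratic of the form $-\frac{1}{2}(A\lambda_1^2-2B\lambda_1+C)$, with $A=\sum_{i=1}^{K}1/v_i^{(K)}$, $B=\sum_{i=1}^{K}m_i^{(K)}/v_i^{(K)}$, and $C$ a term free of $\lambda_1$. Completing the square rewrites this as $-\frac{A}{2}(\lambda_1-B/A)^2$ plus a $\lambda_1$-independent constant; that constant, together with all the normalizing factors dropped earlier, is absorbed into the proportionality symbol.

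I would then read off the parameters: the completed-square kernel is that of a normal density with variance $v^{(1)}=1/A=\left(\sum_{i=1}^{K}1/v_i^{(K)}\right)^{-1}$ and mean $m^{(1)}=B/A=\left(\sum_{i=1}^{K}m_i^{(K)}/v_i^{(K)}\right)/\left(\sum_{i=1}^{K}1/v_i^{(K)}\right)$, which are precisely (\ref{eq:Prop1_v-variance}) and (\ref{eq:Prop1_m-variance}). The precision-parametrized formulas (\ref{eq:Prop1_m-precision}) and (\ref{eq:Prop1_v-precision}) then follow immediately from the substitution $\breve{v}_i^{(K)}=1/v_i^{(K)}$, under which $A=\sum_{i=1}^{K}\breve{v}_i^{(K)}$ gives the additivity of precisions, $\breve{v}^{(1)}=\sum_{i=1}^{K}\breve{v}_i^{(K)}$.

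Honestly there is no substantive obstacle here; the argument is elementary Gaussian algebra. The only point demanding a little care is the bookkeeping of the $\lambda_1$-independent constants, needed to confirm that the resulting kernel normalizes to a \emph{proper} normal density. This is guaranteed because $A>0$ (all $v_i^{(K)}>0$), so the quadratic coefficient is strictly negative and the kernel is integrable; it is exactly this properness that licenses upgrading the ``$\propto$'' of Lemma \ref{lem:Lemma1} to the exact density equalities stated in the proposition.
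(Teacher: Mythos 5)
Your proposal is correct and follows essentially the same route as the paper's Appendix A proof: invoke Lemma \ref{lem:Lemma1}, multiply the $K$ normal kernels evaluated at the common argument $\lambda_{1}$, complete the square, and read off $m^{(1)}$ and $v^{(1)}$, with the precision-parametrized case following by the substitution $\breve{v}_{i}^{(K)}=1/v_{i}^{(K)}$. Your closing remark on properness (integrability since $A>0$) is a small but welcome addition that the paper leaves implicit.
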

\begin{proof}
See Appendix A.
\end{proof}
Following immediately from Proposition \ref{prop:Prop1}, the corollary
below provides expressions for $m^{(1)}$, $v^{(1)}$ and $\breve{v}^{(1)}$
upon the component-wise equality of hyperparameters under $M_{K}$.
\begin{cor}
i) If $m_{1}^{(K)}=m_{2}^{(K)}=\ldots=m_{K}^{(K)}\equiv m^{(K)}$,
then\label{cor:Cor2}
\begin{equation}
m^{(1)}=m^{(K)}.\label{eq:Cor2i_m}
\end{equation}

ii) If $v_{1}^{(K)}=v_{2}^{(K)}=\ldots=v_{K}^{(K)}\equiv v^{(K)}$
(or, equivalently, $\breve{v}_{1}^{(K)}=\breve{v}_{2}^{(K)}=\ldots=\breve{v}_{K}^{(K)}\equiv\breve{v}^{(K)}$),
then 
\begin{equation}
m^{(1)}=\frac{1}{K}\sum_{i=1}^{K}m_{i}^{(K)},\label{eq:Cor2ii_m}
\end{equation}
\begin{equation}
v^{(1)}=\frac{1}{K}v^{(K)}\label{eq:Cor2ii_v}
\end{equation}
and
\begin{equation}
\breve{v}^{(1)}=K\breve{v}^{(K)}.\label{eq:Cor2ii_v-precision}
\end{equation}

\end{cor}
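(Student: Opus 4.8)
The plan is to obtain both parts as immediate specializations of the closed-form expressions for $m^{(1)}$, $v^{(1)}$ and $\breve{v}^{(1)}$ already furnished by Proposition \ref{prop:Prop1}; no new machinery is required beyond substituting the stated equality constraints on the hyperparameters into those formulae and simplifying.

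For part (i), I would start from (\ref{eq:Prop1_m-variance}) and impose $m_i^{(K)}=m^{(K)}$ for every $i$. The common mean then factors out of the numerator, which becomes $m^{(K)}\sum_{i=1}^{K}1/v_i^{(K)}$, i.e.\ precisely $m^{(K)}$ times the denominator, so the ratio collapses to $m^{(K)}$, establishing (\ref{eq:Cor2i_m}). It is worth remarking that this argument uses nothing about the variances, which is exactly why part (i) carries no hypothesis on the $v_i^{(K)}$'s.

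For part (ii), I would substitute $v_i^{(K)}=v^{(K)}$ (equivalently $\breve{v}_i^{(K)}=\breve{v}^{(K)}$) into the three relevant expressions. In (\ref{eq:Prop1_m-variance}) the common variance cancels between numerator and denominator, and since the denominator equals $K/v^{(K)}$ the mean reduces to the arithmetic average $\frac{1}{K}\sum_{i=1}^{K}m_i^{(K)}$, yielding (\ref{eq:Cor2ii_m}). In (\ref{eq:Prop1_v-variance}) the sum $\sum_{i=1}^{K}1/v^{(K)}=K/v^{(K)}$ inverts to $v^{(K)}/K$, giving (\ref{eq:Cor2ii_v}); the precision version (\ref{eq:Cor2ii_v-precision}) then follows either by inverting this or directly from (\ref{eq:Prop1_v-precision}) with $\breve{v}_i^{(K)}=\breve{v}^{(K)}$.

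Since every step is a one-line algebraic simplification of already-proved identities, I do not anticipate any genuine obstacle here; the only point meriting a moment's care is confirming that the variance- and precision-parametrized conclusions are mutually consistent (that is, $\breve{v}^{(1)}=1/v^{(1)}$), which is immediate from $K/v^{(K)}=K\breve{v}^{(K)}$.
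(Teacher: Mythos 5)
Your proposal is correct and matches the paper's treatment: the paper offers no separate proof, stating only that the corollary follows immediately from Proposition \ref{prop:Prop1}, and your substitutions into (\ref{eq:Prop1_m-variance}), (\ref{eq:Prop1_v-variance}) and (\ref{eq:Prop1_v-precision}) are exactly the intended one-line simplifications.
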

According to (\ref{eq:Prop1_m-variance}) and (\ref{eq:Prop1_m-precision}),
the mean of the coherent (normal) prior of $\lambda_{1}$ under the
nested model constitutes a weighted sum of the corresponding means
in the mixture model:
\[
m^{(1)}=\overset{K}{\underset{i=1}{\sum}}w_{i}m_{i}^{(K)},
\]
with the weights given by
\[
w_{i}=\frac{(v_{i}^{(K)})^{-1}}{\overset{K}{\underset{k=1}{\sum}}(v_{k}^{(K)})^{-1}}=\frac{\breve{v}_{i}^{(K)}}{\overset{K}{\underset{k=1}{\sum}}\breve{v}_{k}^{(K)}},\qquad i=1,2,\ldots K.
\]
The result collapses either to a simple average of the means (under
equal variances $v_{i}^{(K)}$; see (\ref{eq:Cor2ii_m})), or, eventually,
to the very mean $m^{(K)}$, should the means coincide in all the\textbf{
}priors $\pi_{\underline{\lambda_{i}}}(\lambda_{i}|M_{K})$, $i=1,2,\ldots,K$;
see (\ref{eq:Cor2i_m}).

As regards the relationship between the dispersion of the priors,
from (\ref{eq:Prop1_v-variance}) it follows that the variance $v^{(1)}$
in the coherent prior of $\lambda_{1}$ under $M_{1}$ amounts to
a \emph{K}-th of the harmonic mean of the individual variances $v_{i}^{(K)}$,
$i=1,2,\ldots,K$. The result immediately translates to the relation
between the corresponding precisions, in terms of which $\breve{v}^{(1)}$
in the reduced model should be the sum of the precisions specified
in the general construction; see (\ref{eq:Prop1_v-precision}). Under
the special case of equal prior variances of all $\lambda_{i}$'s
in $M_{K}$, the resulting variance of $\lambda_{1}$ in $M_{1}$
reduces to a \emph{K}-th of the one assumed within the mixture model;
see (\ref{eq:Cor2ii_v}). Equivalently, the precision $\breve{v}^{(1)}$
is \emph{K} times the one predetermined for $\lambda_{i}$'s, thereby
growing proportionally to the number of the mixture components; see
(\ref{eq:Cor2ii_v-precision}).

We end this subsection by noticing that under the assumptions of Proposition
\ref{prop:Prop1} it is only possible to determine the hyperparameters
in the single-component specification, based on the ones prespecified
in the mixture model, and not the reverse. However, adopting an additional
assumption of the equal prior means and, simultaneously, variances
of $\lambda_{i}$'s under $M_{K}$, allows one to predetermine the
hyperparameters for $\lambda_{1}$ in the nested model first (i.e.,
$m^{(1)}$ and $v^{(1)}$), and then the ones in the general specification
(i.e., $m^{(K)}$ and $v^{(K)}$), employing (\ref{eq:Cor2i_m}) and
(\ref{eq:Cor2ii_v}) (or, equivalently, (\ref{eq:Cor2ii_v-precision})).
The latter idea appears to gain particular importance while considering
models with various number of the mixture components: $M_{K}$ with
$K\in\{K_{min},K_{min}+1,\ldots,K_{max}\}=\mathbb{{K}}$, $K_{min}\ge2$,
along the single-component structure, $M_{1}$. Since the latter constitutes
a special case of all the mixture models under consideration, one
may naturally be prompted to set the hyperparameters under $M_{1}$
first, and then invoke (\ref{eq:Cor2i_m}) and (\ref{eq:Cor2ii_v})
(or, (\ref{eq:Cor2ii_v-precision})) to calculate coherent values
of $m^{(K)}$ and $v^{(K)}$ (or, $\breve{v}^{(K)}$) for each $K\in\mathbb{{K}}$.
Intuitively, though not in the sense of Definition \ref{Definition1},
such an approach would endow the priors of all the\textbf{ }models
with some sort of compatibility, by means of ensuring prior coherence
of the single-component model with each of the mixture specifications
individually.

\subsection{Inverse gamma priors\label{sub:Inverse-gamma}}

We move on to deriving the coherence conditions under setting inverse
gamma priors for $\lambda_{i}$'s in the mixture model.
\begin{prop}
Suppose that each $\lambda_{i}$ $(i=1,2,\ldots,K)$ under $M_{K}$
follows an inverse gamma distribution with shape parameter $a_{i}^{(K)}>0$
and scale parameter $b_{i}^{(K)}>0$:\label{prop:Prop2}
\begin{align*}
\pi_{\underline{\lambda_{i}}}(\lambda_{i}|M_{K}) & =f_{IG}(\lambda_{i}|a_{i}^{(K)},b_{i}^{(K)})\\
 & =\frac{1}{(b_{i}^{(K)})^{a_{i}^{(K)}}\Gamma(a_{i}^{(K)})}(\lambda_{i})^{-(a_{i}^{(K)}+1)}\exp\left\{ -\frac{1}{b_{i}^{(K)}\lambda_{i}}\right\} .
\end{align*}
Then, the coherent prior for $\lambda_{1}$ under $M_{1}$ is the
inverse gamma distribution with shape parameter $a^{(1)}$ and scale
parameter $b^{(1)}$:
\[
\pi_{\underline{\lambda_{1}}}(\lambda_{1}|M_{1})=f_{IG}(\lambda_{1}|a^{(1)},b^{(1)}),
\]
where
\begin{equation}
a^{(1)}=\overset{K}{\underset{i=1}{\sum}}a_{i}^{(K)}+K-1\label{eq:Prop2_a}
\end{equation}
and
\begin{equation}
b^{(1)}=\left(\overset{K}{\underset{i=1}{\sum}}\frac{1}{b_{i}^{(K)}}\right)^{-1}.\label{eq:Prop2_b}
\end{equation}
\end{prop}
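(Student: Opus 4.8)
The plan is to apply Lemma \ref{lem:Lemma1} directly and then recognize the resulting product of kernels as an (unnormalized) inverse gamma density. By (\ref{eq:Lemma1_1}), the coherent prior of $\lambda_1$ under $M_1$ is proportional to $\prod_{i=1}^{K}\pi_{\underline{\lambda_i}}(\lambda_1|M_K)=\prod_{i=1}^{K}f_{IG}(\lambda_1|a_i^{(K)},b_i^{(K)})$, with every factor evaluated at the common argument $\lambda_1$. Since the coordinate-specific normalizing constants $(b_i^{(K)})^{-a_i^{(K)}}/\Gamma(a_i^{(K)})$ do not depend on $\lambda_1$, they may be absorbed into the proportionality sign, so I only need to track the two $\lambda_1$-dependent factors of each inverse gamma kernel: the power $(\lambda_1)^{-(a_i^{(K)}+1)}$ and the exponential $\exp\{-1/(b_i^{(K)}\lambda_1)\}$.

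First I would collect the powers of $\lambda_1$, obtaining an overall exponent $-\sum_{i=1}^{K}(a_i^{(K)}+1)=-(\sum_{i=1}^{K}a_i^{(K)}+K)$. Next I would merge the exponential terms via $\prod_i\exp\{-1/(b_i^{(K)}\lambda_1)\}=\exp\{-(1/\lambda_1)\sum_i 1/b_i^{(K)}\}$. This yields a kernel proportional to $(\lambda_1)^{-(\sum_i a_i^{(K)}+K)}\exp\{-(1/\lambda_1)\sum_i 1/b_i^{(K)}\}$, which I would then match term by term against the generic inverse gamma kernel $(\lambda_1)^{-(a^{(1)}+1)}\exp\{-1/(b^{(1)}\lambda_1)\}$. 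Equating the exponents of $\lambda_1$ gives $a^{(1)}+1=\sum_i a_i^{(K)}+K$, i.e.\ (\ref{eq:Prop2_a}); equating the coefficients inside the exponential gives $1/b^{(1)}=\sum_i 1/b_i^{(K)}$, i.e.\ (\ref{eq:Prop2_b}).

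The calculation is essentially the standard conjugacy argument for the inverse gamma scale parameter, so I do not anticipate a genuine analytic obstacle; the only point deserving a word of justification is that the matched kernel integrates to a finite constant, so that the proportionality in (\ref{eq:Lemma1_1}) indeed closes to a proper density. This is immediate, since $a^{(1)}=\sum_i a_i^{(K)}+K-1>0$ (as each $a_i^{(K)}>0$ and $K\ge 1$) and $b^{(1)}=(\sum_i 1/b_i^{(K)})^{-1}>0$ (as each $b_i^{(K)}>0$), whence $f_{IG}(\lambda_1|a^{(1)},b^{(1)})$ is a well-defined inverse gamma density. Restoring the normalizing constant in the proportionality then delivers exactly the stated coherent prior, completing the argument.
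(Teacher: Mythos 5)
Your argument is correct and coincides with the paper's own proof in Appendix B: both apply Lemma \ref{lem:Lemma1}, multiply the $K$ inverse gamma kernels at the common argument $\lambda_{1}$, collect the powers of $\lambda_{1}$ and the exponential terms, and read off $a^{(1)}$ and $b^{(1)}$ by matching against the inverse gamma kernel. Your added remark on the positivity of $a^{(1)}$ and $b^{(1)}$, ensuring the kernel normalizes to a proper density, is a small but welcome extra beyond what the paper states explicitly.
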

\begin{proof}
See Appendix B.
\end{proof}
The formulae for $a^{(1)}$ and $b^{(1)}$ in the special cases of
component-wise equal hyperparameters under the general model follow
directly from Proposition \ref{prop:Prop2} and are stated in the
corollary below.
\begin{cor}
i) If $a_{1}^{(K)}=a_{2}^{(K)}=\ldots=a_{K}^{(K)}\equiv a^{(K)}$,
then\label{cor:Cor3}
\begin{equation}
a^{(1)}=Ka^{(K)}+K-1.\label{eq:Cor3i_a}
\end{equation}

ii) If $b_{1}^{(K)}=b_{2}^{(K)}=\ldots=b_{K}^{(K)}\equiv b^{(K)}$,
then
\begin{equation}
b^{(1)}=\frac{1}{K}b^{(K)}.\label{eq:Cor3ii_b}
\end{equation}

\end{cor}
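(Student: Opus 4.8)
The plan is to apply Proposition \ref{prop:Prop2} directly, treating each part of the corollary as a specialization of one of the two hyperparameter formulae (\ref{eq:Prop2_a}) and (\ref{eq:Prop2_b}). Since the shape parameter $a^{(1)}$ in (\ref{eq:Prop2_a}) depends only on the shapes $a_{i}^{(K)}$, while the scale parameter $b^{(1)}$ in (\ref{eq:Prop2_b}) depends only on the scales $b_{i}^{(K)}$, the two assertions decouple completely and may be verified independently; in particular, neither part requires the hypothesis imposed in the other.

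For part i), I would start from (\ref{eq:Prop2_a}) and substitute the common value $a_{i}^{(K)}=a^{(K)}$ for every $i=1,2,\ldots,K$. The summation then collapses to $K a^{(K)}$, and retaining the additive term $K-1$ yields $a^{(1)}=K a^{(K)}+K-1$, which is exactly (\ref{eq:Cor3i_a}).

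For part ii), I would begin with (\ref{eq:Prop2_b}) and set $b_{i}^{(K)}=b^{(K)}$ for all $i$. Each reciprocal $1/b_{i}^{(K)}$ becomes $1/b^{(K)}$, so the sum inside the parentheses equals $K/b^{(K)}$; inverting this quantity gives $b^{(1)}=b^{(K)}/K$, that is, (\ref{eq:Cor3ii_b}).

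Because the entire argument reduces to substitution of a constant value into a finite sum followed by elementary simplification, I do not anticipate any genuine obstacle. The only subtlety worth recording is structural rather than computational: the two hypotheses are stated separately precisely because the coherent shape and the coherent scale are governed by disjoint sets of the general model's hyperparameters, so each conclusion is obtained under the weakest assumption that makes it meaningful.
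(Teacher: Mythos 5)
Your proposal is correct and matches the paper's approach exactly: the paper states that Corollary \ref{cor:Cor3} follows directly from Proposition \ref{prop:Prop2}, and your substitution of the common values into (\ref{eq:Prop2_a}) and (\ref{eq:Prop2_b}) is precisely that direct specialization. Your observation that the two parts decouple is also consistent with the paper's separate hypotheses in i) and ii).
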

The relationship between the shape parameters, given by (\ref{eq:Prop2_a}),
suggests that $a^{(1)}$ is an increasing function of the number of
the mixture components (partly on account of its formula involving
the sum of $a_{i}^{(K)}$'s), whereas the scale parameters, $b_{i}^{(K)}$'s
and $b^{(1)}$, are interrelated in the same fashion as the variances
in the case of the normal priors, examined in the previous subsection
(see (\ref{eq:Prop2_b}) and (\ref{eq:Prop1_v-variance})).

Similarly to the previous one, Proposition \ref{prop:Prop2} enables
one to derive coherent values of the hyperparameters under $M_{1},$
based on the ones prespecified under the mixture model, unless these
are held equal across the mixture components (see Corollary \ref{cor:Cor3}).
Turning to the special case of $a_{1}^{(K)}=a_{2}^{(K)}=\ldots=a_{K}^{(K)}\equiv a^{(K)}$,
let us transform (\ref{eq:Cor3i_a}) into
\begin{equation}
a^{(K)}=\frac{a^{(1)}-K+1}{K},\label{eq:25}
\end{equation}
which would be of use once we were to establish the coherent prior
in $M_{K}$, based on the predetermined value of the relevant hyperparameter
in $M_{1}$. Interestingly, to guarantee the positivity of $a^{(K)}$
(as a shape parameter of an inverse gamma distribution) it requires
that
\begin{equation}
a^{(1)}>K-1,\label{eq:Gam_restr_a1}
\end{equation}
which explicitly takes the number of mixture components into account.
Now, evoke the context of handling models $M_{K}$ with various $K\in\mathbb{{K}}$,
as outlined at the end of the previous subsection. In order to ascertain
the prior under each of them coherently with the one prespecified
for the single-component model, the condition
\begin{equation}
a^{(1)}>K_{max}-1\label{eq:Gam_restr_a1_Kmax}
\end{equation}
must be satisfied. As long as (\ref{eq:Gam_restr_a1_Kmax}) holds,
the hyperparameters $a^{(K)}$ calculated through (\ref{eq:25}) are
positive for all $K\in\mathbb{{K}}$. Taking these remarks into account,
it emerges that once models with a different number of the components
are under consideration, it is crucial to fix \emph{a priori }its
maximum, $K_{max}$. With that provided, one proceeds to setting $a^{(1)}$
in compliance with (\ref{eq:Gam_restr_a1_Kmax}), and then to determining
$a^{(K)}$ via (\ref{eq:25}) for each $K\in\mathbb{{K}}$. Incidentally,
note that the issue pertains only to the shape parameters, while reconciling
the scale parameters: $b^{(1)}$ and $b^{(K)}$ (under $b_{1}^{(K)}=b_{2}^{(K)}=\ldots=b_{K}^{(K)}\equiv b^{(K)}$)
for each $K\in\mathbb{{K}}$, does not give rise to similar concerns.

\subsection{Gamma priors\label{sub:Gamma}}

Generally speaking, in some applications it is preferred to employ
the gamma distribution (rather than its inverse alternative) to specify
the prior. Therefore, we devote the present subsection to provide
the coherence conditions also in the case gamma priors are assumed
for all $\lambda_{i}$'s in the mixture model.
\begin{prop}
Suppose that each $\lambda_{i}$ $(i=1,2,\ldots,K)$ under $M_{K}$
follows a gamma distribution with shape parameter $\breve{a}_{i}^{(K)}>0$
and scale parameter $\breve{b}_{i}^{(K)}>0$:\label{prop:Prop3}
\begin{align*}
\pi_{\underline{\lambda_{i}}}(\lambda_{i}|M_{K}) & =f_{G}(\lambda_{i}|\breve{a}_{i}^{(K)},\breve{b}_{i}^{(K)})\\
 & =\frac{(\breve{b}_{i}^{(K)})^{\breve{a}_{i}^{(K)}}}{\Gamma(\breve{a}_{i}^{(K)})}(\lambda_{i})^{\breve{a}_{i}^{(K)}-1}\exp\left\{ -\breve{b}_{i}^{(K)}\lambda_{i}\right\} .
\end{align*}
Then, the coherent prior for $\lambda_{1}$ under $M_{1}$ is the
gamma distribution with shape parameter $\breve{a}^{(1)}$ and scale
parameter $\breve{b}^{(1)}$:
\[
\pi_{\underline{\lambda_{1}}}(\lambda_{1}|M_{1})=f_{G}(\lambda_{1}|\breve{a}^{(1)},\breve{b}^{(1)}),
\]
where
\begin{equation}
\breve{a}^{(1)}=\overset{K}{\underset{i=1}{\sum}}\breve{a}_{i}^{(K)}-K+1\label{eq:Prop3_a}
\end{equation}
and
\begin{equation}
\breve{b}^{(1)}=\overset{K}{\underset{i=1}{\sum}}\breve{b}_{i}^{(K)}.\label{eq:Prop3_b}
\end{equation}
\end{prop}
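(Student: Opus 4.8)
The plan is to apply Lemma \ref{lem:Lemma1} directly, since (\ref{eq:Lemma1_1}) reduces the coherence requirement to identifying the normalized product of the $K$ component priors evaluated at a common argument $\lambda_{1}$. First I would write out
\begin{equation*}
\overset{K}{\underset{i=1}{\prod}}\pi_{\underline{\lambda_{i}}}(\lambda_{1}|M_{K})=\overset{K}{\underset{i=1}{\prod}}\frac{(\breve{b}_{i}^{(K)})^{\breve{a}_{i}^{(K)}}}{\Gamma(\breve{a}_{i}^{(K)})}(\lambda_{1})^{\breve{a}_{i}^{(K)}-1}\exp\left\{ -\breve{b}_{i}^{(K)}\lambda_{1}\right\},
\end{equation*}
and separate the factors that do not depend on $\lambda_{1}$ (the ratios $(\breve{b}_{i}^{(K)})^{\breve{a}_{i}^{(K)}}/\Gamma(\breve{a}_{i}^{(K)})$) from those that do. Because Lemma \ref{lem:Lemma1} asserts the identity only up to a multiplicative constant, the former may be absorbed into the proportionality factor.

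Next I would collect the remaining $\lambda_{1}$-dependent terms. The powers of $\lambda_{1}$ add in the exponent, giving $(\lambda_{1})^{\sum_{i=1}^{K}(\breve{a}_{i}^{(K)}-1)}=(\lambda_{1})^{\sum_{i=1}^{K}\breve{a}_{i}^{(K)}-K}$, while the exponentials combine into $\exp\{-(\sum_{i=1}^{K}\breve{b}_{i}^{(K)})\lambda_{1}\}$. Comparing the resulting kernel with the gamma kernel $(\lambda_{1})^{\breve{a}^{(1)}-1}\exp\{-\breve{b}^{(1)}\lambda_{1}\}$, I would read off $\breve{a}^{(1)}-1=\sum_{i=1}^{K}\breve{a}_{i}^{(K)}-K$ and $\breve{b}^{(1)}=\sum_{i=1}^{K}\breve{b}_{i}^{(K)}$, which are exactly (\ref{eq:Prop3_a}) and (\ref{eq:Prop3_b}). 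Since a gamma kernel integrates to the reciprocal of its standard normalizing constant, the proportionality in (\ref{eq:Lemma1_1}) then pins the coherent prior down to $f_{G}(\lambda_{1}|\breve{a}^{(1)},\breve{b}^{(1)})$.

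The one point requiring care — and the genuine counterpart of the subtlety flagged for the inverse gamma priors — is the \emph{positivity} of the recovered shape parameter. The scale parameter $\breve{b}^{(1)}$ is automatically positive, being a sum of positive quantities; but $\breve{a}^{(1)}=\sum_{i=1}^{K}\breve{a}_{i}^{(K)}-K+1$ need \emph{not} be positive merely because each $\breve{a}_{i}^{(K)}>0$, so here the analogue of (\ref{eq:Gam_restr_a1}) already bites in the forward direction rather than only upon reversal. I would therefore observe that $\breve{a}^{(1)}>0$ holds precisely when $\sum_{i=1}^{K}\breve{a}_{i}^{(K)}>K-1$, which is exactly the condition under which the product kernel is integrable over $(0,\infty)$ and hence the coherent prior is a proper gamma distribution. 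This verification is the only step beyond routine algebra, and I would expect it to be the main thing worth stating explicitly in the write-up.
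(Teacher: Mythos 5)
Your proof is correct and follows essentially the same route as the paper's Appendix C: invoke Lemma \ref{lem:Lemma1}, multiply the $K$ gamma kernels at the common argument $\lambda_{1}$, absorb the normalizing constants into the proportionality, and read off $\breve{a}^{(1)}$ and $\breve{b}^{(1)}$ by matching the resulting kernel to a gamma kernel. Your closing observation that $\breve{a}^{(1)}=\sum_{i=1}^{K}\breve{a}_{i}^{(K)}-K+1$ need not be positive merely because each $\breve{a}_{i}^{(K)}>0$ --- so that integrability of the product kernel, and hence propriety of the coherent prior, requires $\sum_{i=1}^{K}\breve{a}_{i}^{(K)}>K-1$ --- is a genuine point that the paper's proof passes over silently (its discussion of positivity constraints only addresses the reverse direction via (\ref{eq:32})), and it is worth stating explicitly.
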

\begin{proof}
See Appendix C.
\end{proof}
Similarly as in the previous subsections, and following directly from
Proposition \ref{prop:Prop3}, the corollary below delivers expressions
for $\breve{a}^{(1)}$ and $\breve{b}^{(1)}$ under the special cases
of component-wise equal hyperparameters in the mixture model.
\begin{cor}
i) If $\breve{a}_{1}^{(K)}=\breve{a}_{2}^{(K)}=\ldots=\breve{a}_{K}^{(K)}\equiv\breve{a}^{(K)}$,
then\label{cor:Cor4}
\begin{equation}
\breve{a}^{(1)}=K\breve{a}^{(K)}-K+1.\label{eq:Cor4i_a}
\end{equation}

ii) If $\breve{b}_{1}^{(K)}=\breve{b}_{2}^{(K)}=\ldots=\breve{b}_{K}^{(K)}\equiv\breve{b}^{(K)}$,
then
\begin{equation}
\breve{b}^{(1)}=K\breve{b}^{(K)}.\label{eq:Cor4ii_b}
\end{equation}

\end{cor}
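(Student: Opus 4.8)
The plan is to obtain Corollary \ref{cor:Cor4} as an immediate specialization of Proposition \ref{prop:Prop3}, which I take as given. Since the corollary merely substitutes the two component-wise equality conditions into the general formulae (\ref{eq:Prop3_a}) and (\ref{eq:Prop3_b}), the argument is a direct computation rather than a fresh derivation. I would treat parts (i) and (ii) separately, each as a one-line reduction.

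For part (i), I would impose $\breve{a}_{1}^{(K)}=\cdots=\breve{a}_{K}^{(K)}\equiv\breve{a}^{(K)}$ in (\ref{eq:Prop3_a}); the sum $\sum_{i=1}^{K}\breve{a}_{i}^{(K)}$ collapses to $K\breve{a}^{(K)}$, whence
\[
\breve{a}^{(1)}=K\breve{a}^{(K)}-K+1,
\]
which is (\ref{eq:Cor4i_a}). The scale parameters are untouched by this hypothesis, so no other quantity changes. For part (ii), I would instead impose $\breve{b}_{1}^{(K)}=\cdots=\breve{b}_{K}^{(K)}\equiv\breve{b}^{(K)}$ in (\ref{eq:Prop3_b}), so that $\sum_{i=1}^{K}\breve{b}_{i}^{(K)}=K\breve{b}^{(K)}$ and hence $\breve{b}^{(1)}=K\breve{b}^{(K)}$, i.e. (\ref{eq:Cor4ii_b}), with the shape parameter left in its general form.

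Because Proposition \ref{prop:Prop3} is assumed, there is no genuine obstacle in the corollary itself; the only point worth flagging is where its content actually resides, namely the shape offset $-K+1$ inherited from (\ref{eq:Prop3_a}). That offset originates in the underlying application of Corollary \ref{cor:Cor1} / Lemma \ref{lem:Lemma1}: forming the product $\prod_{i=1}^{K}f_{G}(\lambda_{1}|\breve{a}_{i}^{(K)},\breve{b}_{i}^{(K)})$ and collecting the powers of $\lambda_{1}$ produces an exponent $\sum_{i}(\breve{a}_{i}^{(K)}-1)=\sum_{i}\breve{a}_{i}^{(K)}-K$, so matching it to the gamma kernel exponent $\breve{a}^{(1)}-1$ forces the $-K+1$ correction, while the rates $\breve{b}_{i}^{(K)}$ simply add. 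Thus, were one to argue the corollary from scratch (bypassing Proposition \ref{prop:Prop3}), the sole piece of bookkeeping requiring care would be this shape-exponent matching. Given the assumed proposition, however, I would present Corollary \ref{cor:Cor4} as two substitutions and cite Proposition \ref{prop:Prop3}.
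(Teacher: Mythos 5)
Your proposal is correct and matches the paper exactly: the paper states that Corollary \ref{cor:Cor4} follows directly from Proposition \ref{prop:Prop3} by substituting the component-wise equal hyperparameters into (\ref{eq:Prop3_a}) and (\ref{eq:Prop3_b}), which is precisely your two one-line reductions. Your additional remark tracing the $-K+1$ offset back to the exponent bookkeeping in the product of gamma kernels is consistent with the paper's Appendix C proof of the proposition.
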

With regard to the relationship between the shape parameters, in general,
(\ref{eq:Prop3_a}) reveals no evident monotonic dependency of $\breve{a}^{(1)}$
upon the number of mixture components. In the special case of the
component-wise equal $\breve{a}_{i}^{(K)}$'s, it is easily gathered
from (\ref{eq:Cor4i_a}) that $\breve{a}^{(1)}=K(\breve{a}^{(K)}-1)+1$,
which implies $\breve{a}^{(1)}$ may be constant in \emph{K} (under
$\breve{a}^{(K)}=1$), or increasing $(\breve{a}^{(K)}>1)$, or decreasing
$(\breve{a}^{(K)}<1)$\emph{K}. As far as the scale parameters are
concerned, they follow the pattern of the precisions entertained under
the precision-parametrized normal priors in Subsection \ref{sub:Normal}
(see (\ref{eq:Prop3_b}) and (\ref{eq:Prop1_v-precision})), rather
than the variances, which was the case under the inverse gamma priors. 

Contrary to the inverse gamma priors analyzed previously, working
under the gamma distributions provides an easy route to establishing
coherent values of the shape parameters once, again, models $M_{K}$
with various $K\in\mathbb{K}$ are at hand, and, given the number
of components, all the hyperparameters $\breve{a}_{i}^{(K)}$'s are
held equal. To this end, transform (\ref{eq:Cor4i_a}) and (\ref{eq:Cor4ii_b}),
respectively, into
\begin{equation}
\breve{a}^{(K)}=\frac{\breve{a}^{(1)}+K-1}{K}\label{eq:32}
\end{equation}
and
\begin{equation}
\breve{b}^{(K)}=\frac{1}{K}\breve{b}^{(1)}.\label{eq:33}
\end{equation}
Setting any $\breve{a}^{(1)}>0$ in (\ref{eq:32}) yields a positive
value of $\breve{a}^{(K)}$ for any $K\in\{2,3,\ldots\}$. Hence,
the approach disposes of \emph{a priori} fixing the maximum number
of components, otherwise necessitated under the inverse gamma framework.

\section{Coherence of constrained prior distributions\label{sec:Coherence-of-constrained}}

In the foregoing, only unconstrained priors, given by (\ref{eq:2})
and (\ref{eq:3}), under all the models have been considered. However,
in practice, it may be that some restrictions are to be imposed on
the parameters of the mixture model, usually aiming at ensuring the
identifiability of the mixture components or, possibly in addition
to that, some sort of regularity, such as the second-order stationarity
(in the time series framework). Therefore, in the present section,
we study the way in which introducing such parametric constraints
into the mixture model's prior affects the general results stated
in Lemma \ref{lem:Lemma1} and Corollary \ref{cor:Cor1}.

\subsection{Priors with identifiability constraints\label{sub:Priors-with-identifiability}}

There has already been a large variety of techniques advanced in the
literature to exert identifiability of the mixture model's components,
each procedure designed to tackle the widely-recognized label switching
issue, an inherent ailment of the mixture modeling. For a review and
more recent studies in the field we refer the reader to, e.g., \citet{JasraHolmesStephens:2005:LabelSwitching},\textbf{
}\citet{MarinMengersenRobert:2005:BayesianMixtures}, \citet{FruhwirthSchnatter:2006MixtureAndMS},
\citet{Yao:2012a:LabelSwitching1}, \citet{Yao:2012b:LabelSwitching2},
and the references therein. The most straightforward method (though
not universally recommended, according to the cited authors) consists
in imposing an inequality constraint upon the coordinates of the vector
$\lambda_{j}^{(K)}$ for a given $j\in\{1,2,\ldots,n\}$, such as
\begin{equation}
\lambda_{1,j}\leq\lambda_{2,j}\leq\ldots\leq\lambda_{K,j}.\label{eq:inequal_constraint}
\end{equation}
(Notice that the subscript \emph{j} is henceforth reintroduced in
the notation). We stress that it is the strict-inequalities variant
of (\ref{eq:inequal_constraint}) that is usually engaged in the literature,
thereby actually prohibiting the single-component structure from nesting
itself within the mixture model. Admittedly, such an approach is entirely
valid within the subjective setting, which, obviously, does not necessitate
establishing any relation between the models under consideration,
their priors included, even if such a one is conceivable. However,
aiming at ensuring the prior coherence between the single-component
model and the mixture model, with the latter's prior constrained,
does require allowing for the weak inequalities in (\ref{eq:inequal_constraint}),
for otherwise the former could not be obtained from the latter via
conditioning upon $\lambda_{1,j}=\lambda_{2,j}=\ldots=\lambda_{K,j}$,
$j=1,2,\ldots,n$. Finally, note that the distinction between the
weak and the strict inequalities within a continuous random variables
framework is hardly a matter of concern.

With no loss of generality we assume that the identifiability restriction
is imposed on the prior of $\lambda_{1}^{(K)}$, i.e., for $j=1$,
whereas the priors of the remaining $\lambda_{j}^{(K)}$'s $(j=2,3,\ldots,n)$
are unconstrained and coincide with (\ref{eq:4}). The prior for $\lambda_{1}^{(K)}$
can be written as
\begin{equation}
\pi_{\underline{\lambda_{1}^{(K)}}}(\lambda_{1}^{(K)}|M_{K})\propto\left[\overset{K}{\underset{i=1}{\prod}}\pi_{\underline{\lambda_{i,1}}}(\lambda_{i,1}|M_{K})\right]\mathbb{I}_{C_{K}}(\lambda_{1}^{(K)}),\label{eq:Constrained_prior_for_L1}
\end{equation}
where
\[
C_{K}=\{(c_{1}\; c_{2}\;\ldots\; c_{K})^{\prime}\in\mathbb{{R}}^{K}:c_{1}\leq c_{2}\leq\ldots\leq c_{K}\}
\]
and $\mathbb{I}_{C_{K}}(\cdot)$ represents the indicator function
of the set $C_{K}$. Incidentally, note a slight abuse of notation
in (\ref{eq:Constrained_prior_for_L1}), for $\pi_{\underline{\lambda_{i,1}}}(\lambda_{i,1}|M_{K})$
is actually no longer the marginal prior of $\lambda_{i,1}$, which
is due to the inequality constraint introducing stochastic dependency
between the coordinates of $\lambda_{1}^{(K)}$.

Proceeding along the same lines of reasoning as presented in Section
\ref{sec:Prior-coherence-in}, we rewrite (\ref{eq:Constrained_prior_for_L1})
under the reparametrized model, $\widetilde{M}_{K}$:
\begin{align}
\pi_{\underline{\widetilde{\lambda}_{1}^{(K)}}}(\widetilde{\lambda}_{1}^{(K)}|\widetilde{M}_{K}) & =\pi_{\underline{\lambda_{1}^{(K)}}}\left(g^{-1}(\widetilde{\lambda}_{1}^{(K)})|M_{K}\right)\nonumber \\
 & \propto\pi_{\underline{\lambda_{1,1}}}\left(\lambda_{1,1}|M_{K}\right)\left[\overset{K}{\underset{i=2}{\prod}}\pi_{\underline{\lambda_{i,1}}}\left(\tau_{i,1}+\lambda_{1,1}|M_{K}\right)\right]\mathbb{I}_{C_{K-1}^{+}}(\tau_{1}),\label{eq:39}
\end{align}
where
\[
C_{K-1}^{+}=\{(c_{1}\; c_{2}\;\ldots\; c_{K-1})^{\prime}\in\mathbb{{R}}^{K-1}:0\leq c_{1}\leq c_{2}\leq\ldots\leq c_{K-1}\}
\]
and $\tau_{1}=(\tau_{2,1}\;\tau_{3,1}\;\ldots\;\tau_{K,1})^{\prime}$,
$\tau_{i,1}=\lambda_{i,1}-\lambda_{1,1}$ $(i=2,3,...,K)$, so that
the presence of $\mathbb{I}_{C_{K-1}^{+}}(\tau_{1})$ in (\ref{eq:39})
is equivalent to restricting the contrasts with the inequality $0\leq\tau_{2,1}\leq\tau_{3,1}\leq\ldots\leq\tau_{K,1}$.
Now, recognizing that $\mathbb{I}_{C_{K-1}^{+}}(0_{[(K-1)\times1]})=1$,
and following the proof of Lemma \ref{lem:Lemma1} we obtain:
\begin{align*}
\pi_{\underline{\lambda_{1,1}}}(\lambda_{1,1}|M_{1}) & =\pi_{\underline{\lambda_{1,1}}|\underline{\tau_{1}}}(\lambda_{1,1}|\tau_{1}=0_{[(K-1)\times1]},\widetilde{M}_{K})\\
 & \propto\pi_{\underline{\lambda_{1,1}},\underline{\tau_{1}}}(\lambda_{1,j},\tau_{1}=0_{[(K-1)\times1]}|\widetilde{M}_{K})\\
 & \propto\pi_{\underline{\lambda_{1,1}}}(\lambda_{1,1}|M_{K})\left[\overset{K}{\underset{i=2}{\prod}}\pi_{\underline{\lambda_{i,1}}}(\lambda_{1,1}|M_{K})\right]\mathbb{I}_{C_{K-1}^{+}}(0_{[(K-1)\times1]})\\
 & =\overset{K}{\underset{i=1}{\prod}}\pi_{\underline{\lambda_{i,1}}}(\lambda_{1,1}|M_{K}),
\end{align*}
which coincides with the result displayed in the lemma. Hence, we
conclude that constraining the mixture model's prior with an identifiability
restriction does not affect the coherence conditions stated in Lemma
\ref{lem:Lemma1} and Corollary \ref{cor:Cor1}.

\subsection{Priors with regularity constraints\label{sub:Priors-with-regularity}}

Another common type of parametric restrictions introduced into statistical
models are the ones enforcing some sort of regularity, arising from
the theory underlying the phenomenon at hand or being of a rather
technical nature (e.g., ensuring the second-order stationarity in
the time series framework). Therefore, we move on to establishing
the way in which a regularity restriction imposed upon the mixture
model's prior translates into the form of the coherent prior under
the single-component specification.

Let $\zeta_{K}(\cdot):\Theta^{(K)}\rightarrow\mathbb{R}$ be such
a function of $\theta^{(K)}$ that the regularity constraint under
$M_{K}$ is satisified if and only if $\zeta_{K}(\theta^{(K)})\in R_{K}\subset\mathbb{R}$
(or, equivalently, $\mathbb{{I}}_{R_{K}}\left\{ \zeta_{K}(\theta^{(K)})\right\} =1$),
and $\zeta_{K}(\theta^{(K)})$ becomes invariant with respect to $\eta$
under the reducing restrictions given by (\ref{eq:8}).

Rewriting $\theta^{(K)}=(\delta^{\prime}\;\lambda_{1}^{(K)\prime}\;\lambda_{2}^{(K)\prime}\;\ldots\;\lambda_{n}^{(K)\prime}\;\eta^{\prime})^{\prime}$
as $\theta^{(K)}=(\delta^{\prime}\;\lambda^{(K)\prime}\;\eta^{\prime})^{\prime}$
with $\lambda^{(K)}=(\lambda_{1}^{(K)\prime}\;\lambda_{2}^{(K)\prime}\;\ldots\;\lambda_{n}^{(K)\prime})^{\prime}$,
and assuming prior independence (though only up to the regularity
restriction), the constrained prior under the mixture model presents
itself as
\begin{align*}
\pi_{\underline{\theta^{(K)}}}(\theta^{(K)}|M_{K}) & \propto\pi_{\underline{\delta}}(\delta|M_{K})\pi_{\underline{\lambda^{(K)}}}(\lambda^{(K)}|M_{K})\pi_{\underline{\eta}}(\eta|M_{K})\mathbb{{I}}_{R_{K}}\left\{ \zeta_{K}(\theta^{(K)})\right\} \\
 & =\pi_{\underline{\delta}}(\delta|M_{K})\left[\overset{n}{\underset{j=1}{\prod}}\pi_{\underline{\lambda_{j}^{(K)}}}(\lambda_{j}^{(K)}|M_{K})\right]\pi_{\underline{\eta}}(\eta|M_{K})\mathbb{{I}}_{R_{K}}\left\{ \zeta_{K}(\theta^{(K)})\right\} ,
\end{align*}
with each $\pi_{\underline{\lambda_{j}^{(K)}}}(\lambda_{j}^{(K)}|M_{K})$
$(j=1,2,\ldots,n)$ being given by (\ref{eq:4}). As regards deriving
from the above expression the coherent prior under the single-component
model, one conjectures that it is also to be constrained with some
restriction, say $\zeta_{1}(\theta^{(1)})\in R_{1}\subset\mathbb{R}$.
Although a precise relation between $\zeta_{1}(\cdot):\Theta^{(1)}\rightarrow\mathbb{R}$
and $\zeta_{K}(\cdot)$ is yet to be specified, we shall write a prototypical
form, so to say, of the prior under $M_{1}$:
\begin{align}
\pi_{\underline{\theta^{(1)}}}(\theta^{(1)}|M_{1}) & \propto\pi_{\underline{\delta}}(\delta|M_{1})\left[\overset{n}{\underset{j=1}{\prod}}\pi_{\underline{\lambda_{1,j}}}(\lambda_{1,j}|M_{1})\right]\mathbb{{I}}_{R_{1}}\left\{ \zeta_{1}(\theta^{(1)})\right\} .\label{eq:Prior_L1_prototype}
\end{align}

Further, let us recast $M_{K}$ into $\widetilde{M}_{K}$ (with the
transform \emph{g} affecting only $\lambda_{j}^{(K)}$'s, as in Section
\ref{sec:Prior-coherence-in}), so that
\begin{align*}
\pi_{\underline{\widetilde{\theta}^{(K)}}}(\widetilde{\theta}^{(K)}|\widetilde{M}_{K}) & \propto\pi_{\underline{\delta}}(\delta|\widetilde{M}_{K})\pi_{\underline{\widetilde{\lambda}^{(K)}}}(\widetilde{\lambda}^{(K)}|\widetilde{M}_{K})\pi_{\underline{\eta}}(\eta|\widetilde{M}_{K})\\
 & \times\mathbb{{I}}_{R_{K}}\left\{ \zeta_{K}\left(\delta,g^{-1}(\widetilde{\lambda}_{1}^{(K)}),g^{-1}(\widetilde{\lambda}_{2}^{(K)}),\ldots,g^{-1}(\widetilde{\lambda}_{n}^{(K)}),\eta\right)\right\} ,
\end{align*}
where $\widetilde{\lambda}^{(K)}=(\widetilde{\lambda}_{1}^{(K)\prime}\;\widetilde{\lambda}_{2}^{(K)\prime}\;\ldots\;\widetilde{\lambda}_{n}^{(K)\prime})^{\prime}$,
$\pi_{\underline{\delta}}(\delta|\widetilde{M}_{K})=\pi_{\underline{\delta}}(\delta|M_{K})$,
$\pi_{\underline{\eta}}(\eta|\widetilde{M}_{K})=\pi_{\underline{\eta}}(\eta|M_{K})$
and
\[
\pi_{\underline{\widetilde{\lambda}^{(K)}}}(\widetilde{\lambda}^{(K)}|\widetilde{M}_{K})=\overset{n}{\underset{j=1}{\prod}}\pi_{\underline{\widetilde{\lambda}_{j}^{(K)}}}(\widetilde{\lambda}_{j}^{(K)}|\widetilde{M}_{K}).
\]
Employing the end result of (\ref{eq:7}) into $\pi_{\underline{\widetilde{\theta}^{(K)}}}(\widetilde{\theta}^{(K)}|\widetilde{M}_{K})$,
one obtains
\begin{align*}
\pi_{\underline{\widetilde{\theta}^{(K)}}}(\widetilde{\theta}^{(K)}|\widetilde{M}_{K}) & \propto\pi_{\underline{\delta}}(\delta|M_{K})\left[\overset{n}{\underset{j=1}{\prod}}\left(\pi_{\underline{\lambda_{1,j}}}\left(\lambda_{1,j}|M_{K}\right)\overset{K}{\underset{i=2}{\prod}}\pi_{\underline{\lambda_{i,j}}}\left(\tau_{i,j}+\lambda_{1,j}|M_{K}\right)\right)\right]\\
 & \times\pi_{\underline{\eta}}(\eta|M_{K})\mathbb{{I}}_{R_{K}}\left\{ \zeta_{K}\left(\delta,g^{-1}(\widetilde{\lambda}_{1}^{(K)}),g^{-1}(\widetilde{\lambda}_{2}^{(K)}),\ldots,g^{-1}(\widetilde{\lambda}_{n}^{(K)}),\eta\right)\right\} .
\end{align*}
Now, notice that under $\tau_{j}=0_{[(K-1)\times1]}$, we get $g^{-1}(\widetilde{\lambda}_{j}^{(K)})=\lambda_{1,j}\iota_{K}$,
with $\iota_{K}=(1\;1\;\ldots\;1)^{\prime}\in\mathbb{R}^{K}$ and
$j=1,2,\ldots,n$. Finally, the coherent prior distribution under
$M_{1}$ is derived:
\begin{align*}
\pi_{\underline{\theta^{(1)}}}(\theta^{(1)}|M_{1}) & \propto\pi_{\underline{\delta}}(\delta|M_{K})\\
 & \times\pi_{\underline{\widetilde{\lambda}^{(K)}}|\underline{\tau_{2}},\underline{\tau_{3}},\ldots,\underline{\tau_{n}}}(\widetilde{\lambda}^{(K)}|\tau_{2}=\tau_{3}=\ldots=\tau_{n}=0_{[(K-1)\times1]},\widetilde{M}_{K})\\
 & \times\mathbb{{I}}_{R_{K}}\left\{ \zeta_{K}\left(\delta,\lambda_{1,1}\iota_{K},\lambda_{1,2}\iota_{K},\ldots,\lambda_{1,n}\iota_{K},\eta\right)\right\} \\
 & =\pi_{\underline{\delta}}(\delta|M_{K})\left[\overset{n}{\underset{j=1}{\prod}}\overset{K}{\underset{i=1}{\prod}}\pi_{\underline{\lambda_{i,j}}}\left(\lambda_{1,j}|M_{K}\right)\right]\\
 & \times\mathbb{{I}}_{R_{K}}\left\{ \zeta_{K}\left(\delta,\lambda_{1,1}\iota_{K},\lambda_{1,2}\iota_{K},\ldots,\lambda_{1,n}\iota_{K},\eta\right)\right\} .
\end{align*}
To reconcile the above expression with (\ref{eq:Prior_L1_prototype}),
the following conditions must hold simultaneously:
\begin{equation}
\pi_{\underline{\delta}}(\delta|M_{1})=\pi_{\underline{\delta}}(\delta|M_{K}),\label{eq:Regularity_cond_1}
\end{equation}

\begin{equation}
\pi_{\underline{\lambda_{1,j}}}(\lambda_{1,j}|M_{1})\propto\overset{K}{\underset{i=1}{\prod}}\pi_{\underline{\lambda_{i,j}}}\left(\lambda_{1,j}|M_{K}\right),\label{eq:Regularity_cond_2}
\end{equation}

\begin{equation}
\mathbb{{I}}_{R_{1}}\left\{ \zeta_{1}(\theta^{(1)})\right\} =1\Leftrightarrow\mathbb{{I}}_{R_{K}}\left\{ \zeta_{K}\left(\delta,\lambda_{1,1}\iota_{K},\lambda_{1,2}\iota_{K},\ldots,\lambda_{1,n}\iota_{K},\eta\right)\right\} =1.\label{eq:Regularity_cond_3}
\end{equation}
Note that (\ref{eq:Regularity_cond_1}) and (\ref{eq:Regularity_cond_2})
coincide with (\ref{eq:5}) and (\ref{eq:Lemma1_1}), respectively.
Hence, from (\ref{eq:Regularity_cond_1})-(\ref{eq:Regularity_cond_3})
it follows that in order to design a coherent prior under the single-component
model one needs to:
\begin{enumerate}
\item Comply with the rules formulated for the case of unconstrained priors;
see (\ref{eq:5}) and Lemma \ref{lem:Lemma1}.
\item Restrain the single-component model's prior with a restriction equivalent
to the one restraining the mixture model's prior under the nesting
restrictions, given by (\ref{eq:8}).
\end{enumerate}

\section{Example: A coherent prior structure for a class of stationary Markov-switching
AR(2) models\label{sec:Example}}

Consider the following \emph{K}-state Markov-switching AR(2) model:
\begin{equation}
y_{t}=\alpha_{S_{t}}+\phi_{S_{t},1}y_{t-1}+\phi_{S_{t},2}y_{t-2}+\sigma_{S_{t}}\varepsilon_{t},\label{eq:MSIAH_AR2_model}
\end{equation}
where $\varepsilon_{t}\sim iiN(0,1)$ and the sequence $\{S_{t}\}$
forms a homogeneous and ergodic Markov chain with finite state-space
$\mathbb{{S}}=\{1,2,\ldots,K\}$ and transition probabilities $\eta_{ij}\equiv\Pr{(S_{t}=j|S_{t-1}=i)}$,
arrayed in transition matrix $P=[\eta_{ij}]_{i,j=1,2,\ldots,K}$.
Adopting the convention introduced by \citet{Krolzig:1997MSVAR},
we refer to (\ref{eq:MSIAH_AR2_model}) as the MSIAH(\emph{K})-AR(2)
model (or, $M_{K}$, in short), which indicates allowing all the parameters
to feature Markovian breaks, i.e., the intercept, the autoregressive
coefficients and the error term's variance. Let $\alpha^{(K)}=(\alpha_{1}\;\alpha_{2}\;\ldots\;\alpha_{K})^{\prime}$,
$\phi_{1}^{(K)}=(\phi_{1,1}\;\phi_{2,1}\;\ldots\;\phi_{K,1})^{\prime}$,
$\phi_{2}^{(K)}=(\phi_{1,2}\;\phi_{2,2}\;\ldots\;\phi_{K,2})^{\prime}$,
$\varsigma^{(K)}=(\sigma_{1}^{-2}\;\sigma_{2}^{-2}\;\ldots\;\sigma_{K}^{-2})^{\prime}$,
and $\eta$ be structured as described in Section \ref{sec:Prior-coherence-in},
so that
\[
\theta^{(K)}=(\alpha^{(K)\prime}\;\phi_{1}^{(K)\prime}\;\phi_{2}^{(K)\prime}\;\varsigma^{(K)\prime}\;\eta^{\prime})^{\prime}.
\]
The model under consideration generalizes the following AR(2) specification
(hereafter denoted by $M_{1}$):
\begin{equation}
y_{t}=\alpha+\phi_{1}y_{t-1}+\phi_{2}y_{t-2}+\sigma\varepsilon_{t}\label{eq:AR2_model}
\end{equation}
in that $M_{K}$ introduces discrete changes into each of the four
parameters of $M_{1}$ (grouped in $\theta^{(1)}=(\alpha\;\phi_{1}\;\phi_{2}\;\sigma^{-2})^{\prime}$).

Based on the results provided by \citet{FrancqZakoian:2001:MSVAR},
for the MSIAH(\emph{K})-AR(2) process to be nonanticipative (i.e.,
causal) and second-order stationary it suffices that
\begin{equation}
\rho(P_{2})<1,\label{eq:Stationarity_cond_MSIAH_AR2}
\end{equation}
where $\rho(P_{2})$ signifies the spectral radius of matrix $P_{2}$
defined as
\begin{equation}
P_{2}=\begin{pmatrix}\eta_{11}(\Phi_{1}\otimes\Phi_{1}) & \eta_{21}(\Phi_{1}\otimes\Phi_{1}) & \cdots & \eta_{K1}(\Phi_{1}\otimes\Phi_{1})\\
\eta_{12}(\Phi_{2}\otimes\Phi_{2}) & \eta_{22}(\Phi_{2}\otimes\Phi_{2}) & \cdots & \eta_{K2}(\Phi_{2}\otimes\Phi_{2})\\
\vdots & \vdots &  & \vdots\\
\eta_{1K}(\Phi_{K}\otimes\Phi_{K}) & \eta_{2K}(\Phi_{K}\otimes\Phi_{K}) & \cdots & \eta_{KK}(\Phi_{K}\otimes\Phi_{K})
\end{pmatrix},\label{eq:P2_matrix}
\end{equation}
with
\[
\Phi_{k}=\begin{pmatrix}\phi_{k,1} & \phi_{k,2}\\
1 & 0
\end{pmatrix},\qquad k=1,2,\ldots,K,
\]
and $\otimes$ denoting the matrix tensor product. Assuming the mutual
independence of $\theta^{(K)}$'s individual components, the prior
under $M_{K}$ can be written as
\begin{align*}
\pi(\theta^{(K)}|M_{K}) & =\pi(\alpha^{(K)}|M_{K})\pi(\phi_{1}^{(K)}|M_{K})\pi(\phi_{2}^{(K)}|M_{K})\\
 & \times\pi(\varsigma^{(K)}|M_{K})\pi(\eta|M_{K})\mathbb{{I}}_{R_{K}}\{\rho(P_{2})\},
\end{align*}
where $R_{K}=[0,1)$. Note that we simplified the notation by dropping
the subscripts indexing densities, and write, generally, $\pi(\omega)$
instead of $\pi_{\underline{\omega}}(\omega)$. Similarly, the prior
under $M_{1}$ is given by 
\begin{align*}
\pi(\theta^{(1)}|M_{1}) & =\pi(\alpha|M_{1})\pi(\phi_{1}|M_{1})\pi(\phi_{2}|M_{1})\\
 & \times\pi(\sigma^{-2}|M_{1})\mathbb{{I}}_{R_{1}}\{\zeta_{1}(\theta^{(1)})\}.
\end{align*}
Notice that, for the sake of exposition, we do not impose any identifiability
restriction upon $\pi(\theta^{(K)}|M_{K})$, though we stress that
it would not alter the following considerations (see Subsection \ref{sub:Priors-with-identifiability}).

To derive the specific forms of $\zeta_{1}(\theta^{(1)})$ and $R_{1}$,
complying with the coherence condition given by (\ref{eq:Regularity_cond_3}),
one needs to ponder (\ref{eq:Stationarity_cond_MSIAH_AR2}) under
the equality restrictions: $\phi_{1,1}=\phi_{2,1}=\ldots=\phi_{K,1}\equiv\phi_{1}$
and $\phi_{1,2}=\phi_{2,2}=\ldots=\phi_{K,2}\equiv\phi_{2}$. (Notice
that the switching intercepts, $\alpha^{(K)}$, and the error term's
precisions, $\varsigma^{(K)}$, do not need to be restricted with
the nesting equalities, in the process). With that provided, the matrices
$\Phi_{k}$'s collapse into
\[
\Phi=\begin{pmatrix}\phi_{1} & \phi_{2}\\
1 & 0
\end{pmatrix},
\]
which coincides with the companion matrix for the AR(2) process defined
in (\ref{eq:AR2_model}). Supplanting $\Phi_{k}$'s with $\Phi$ in
(\ref{eq:P2_matrix}) we obtain
\[
P_{2}=\begin{pmatrix}\eta_{11}(\Phi\otimes\Phi) & \eta_{21}(\Phi\otimes\Phi) & \cdots & \eta_{K1}(\Phi\otimes\Phi)\\
\eta_{12}(\Phi\otimes\Phi) & \eta_{22}(\Phi\otimes\Phi) & \cdots & \eta_{K2}(\Phi\otimes\Phi)\\
\vdots & \vdots &  & \vdots\\
\eta_{1K}(\Phi\otimes\Phi) & \eta_{2K}(\Phi\otimes\Phi) & \cdots & \eta_{KK}(\Phi\otimes\Phi)
\end{pmatrix}=P^{\prime}\otimes\Phi\otimes\Phi.
\]
Then $\rho(P_{2})=\rho(P^{\prime}\otimes\Phi\otimes\Phi)=\rho(P^{\prime})[\rho(\Phi)]^{2}=[\rho(\Phi)]^{2}$,
for $P$ is a stochastic matrix. Finally,
\begin{align*}
\mathbb{{I}}_{R_{K}}\{\rho(P_{2})\}=1 & \Leftrightarrow\mathbb{{I}}_{R_{K}}\left\{ [\rho(\Phi)]^{2}\right\} =1\\
 & \Leftrightarrow\mathbb{{I}}_{R_{K}}\left\{ \rho(\Phi)\right\} =1.
\end{align*}
The latter expression requires that the maximum absolute eigenvalue
of $\Phi$ be less than one, which is equivalent to the well-known
condition for the AR(2) process to be nonanticipative and second-order
stationary, necessitating all eigenvalues of the companion matrix
to fall within the interval $(-1,1)$. Therefore, we assume that $\zeta_{1}(\theta^{(1)}):=\rho(\Phi)$
and $R_{1}=R_{K}=[0,1)$.

As regards particular choice for the individual densities comprising
$\pi(\theta^{(K)}|M_{K})$, while keeping to the assumptions stated
in Section \ref{sec:Prior-coherence-in}, we follow a typical framework
by setting
\begin{itemize}
\item normal distributions for the coordinates of $\alpha^{(K)}$, $\phi_{1}^{(K)}$
and $\phi_{2}^{(K)}$:
\begin{equation}
\pi(\alpha^{(K)}|M_{K})=\overset{K}{\underset{i=1}{\prod}}f_{N}\left(\alpha_{i}|m_{\alpha}^{(K)},(\breve{v}_{\alpha}^{(K)})^{-1}\right),\label{eq:Prior_alfa_MK}
\end{equation}
\begin{equation}
\pi(\phi_{1}^{(K)}|M_{K})=\overset{K}{\underset{i=1}{\prod}}f_{N}\left(\phi_{i,1}|m_{\phi_{1}}^{(K)},(\breve{v}_{\phi_{1}}^{(K)})^{-1}\right),\label{eq:Prior_fi1_MK}
\end{equation}
\begin{equation}
\pi(\phi_{2}^{(K)}|M_{K})=\overset{K}{\underset{i=1}{\prod}}f_{N}\left(\phi_{i,2}|m_{\phi_{2}}^{(K)},(\breve{v}_{\phi_{2}}^{(K)})^{-1}\right);\label{eq:Prior_fi2_MK}
\end{equation}

\item gamma distributions for the coordinates of $\varsigma^{(K)}$ (or,
alternatively, the inverse gamma distributions for the variances $\sigma_{i}^{2}$,
$i=1,2,\ldots,K$):
\begin{equation}
\pi(\varsigma^{(K)}|M_{K})=\overset{K}{\underset{i=1}{\prod}}f_{G}\left(\sigma_{i}^{-2}|\breve{a}^{(K)},\breve{b}^{(K)}\right);\label{eq:Prior_sigma_MK}
\end{equation}

\item Dirichlet distributions for the (\emph{a priori }independent) rows
of the transition matrix:
\begin{equation}
\pi(\eta_{1},\eta_{2},\ldots,\eta_{K}|M_{K})=\overset{K}{\underset{i=1}{\prod}}f_{Dir}\left(\eta_{i}|d_{i}^{(K)}\right),\label{eq:Prior_eta_MK}
\end{equation}
with $d_{i}^{(K)}=(d_{i,1}\; d_{i,2}\;\ldots\; d_{i,K})^{\prime}$
standing for the vector of the hyperparameters.
\end{itemize}
Note that, quite customarily, equal hyperparameters over the regimes
are assumed in (\ref{eq:Prior_alfa_MK})-(\ref{eq:Prior_sigma_MK}).

Following the results presented in Propositions \ref{prop:Prop1}
and \ref{prop:Prop3}, coherent priors under $M_{1}$ can be written
as
\begin{equation}
\pi(\alpha|M_{1})=f_{N}\left(\alpha|m_{\alpha}^{(1)},(\breve{v}_{\alpha}^{(1)})^{-1}\right),\label{eq:Prior_alfa_M1}
\end{equation}

\begin{equation}
\pi(\phi_{1}|M_{1})=f_{N}\left(\phi_{1}|m_{\phi_{1}}^{(1)},(\breve{v}_{\phi_{1}}^{(1)})^{-1}\right),\label{eq:Prior_fi1_M1}
\end{equation}

\begin{equation}
\pi(\phi_{2}|M_{1})=f_{N}\left(\phi_{2}|m_{\phi_{2}}^{(1)},(\breve{v}_{\phi_{2}}^{(1)})^{-1}\right),\label{eq:Prior_fi2_M1}
\end{equation}

\begin{equation}
\pi(\sigma^{-2}|M_{1})=f_{G}\left(\sigma^{-2}|\breve{a}^{(1)},\breve{b}^{(1)}\right),\label{eq:Prior_sigma_M1}
\end{equation}
with the hyperparameters related with the ones displayed in (\ref{eq:Prior_alfa_MK})-(\ref{eq:Prior_sigma_MK})
via Formulae (\ref{eq:Cor2ii_m}) and (\ref{eq:Cor2ii_v-precision})
(in the case of the normals), and (\ref{eq:Cor4i_a}) and (\ref{eq:Cor4ii_b})
(in the case of the gamma distributions).

Since the hyperparameters for each group of the switching parameters
under $M_{K}$ are held equal across the regimes, there are actually
two routes available to establish ceherent prior structures. Within
the first one, one sets the values of the hyperparameters under the
general model first, and then the ones under the single-component
model. Within the second approach, one proceeds the other way round.
However, should different vaules of the hyperparameters for a given
group of the switching parameters under $M_{K}$ be allowed, then
only the first of the two strategies can be followed, with the relevant
formulae provided in Propositions \ref{prop:Prop1} and \ref{prop:Prop3}.

Eventually, notice that the two: the AR(2) and the MSIAH(\emph{K})-AR(2)
model, represent the extremes, with the former featuring no switches
at all, and the latter, on the other hand, introducting Markovian
breaks into all the four coefficients at once: the intercept, the
two autoregressive parameters, and the error term's variance. Therefore,
the two specifications do not share any common parameters. Naturally,
one may be prompted to limit the set of the parameters enabled to
switch to include only one, two, or three out of the four, in each
case obtaining some ``intermediate'' specification. Should that
be the case, our methodology for establishing coherent priors applies
straightforwardly. To deliver some illustrative example, consider
an AR(2) model with switches introduced only into the intercept, hereafter
denoted as MSI(\emph{K})-AR(2) or $M_{K}^{*}$, in short. Obviously,
it forms one of all the\textbf{ }conceivable ``intermediate'' specifications,
nesting the single-component AR(2) model on the one hand, and being
nested within the MSIAH(\emph{K})-AR(2) model, on the other. Write
$\theta_{*}^{(K)}=(\alpha^{(K)\prime}\;\phi_{1}\;\phi_{2}\;\sigma^{-2}\;\eta^{\prime})^{\prime}$
for the vector of $M_{K}^{*}$'s parameters, with $\alpha^{(K)}=(\alpha_{1}\;\alpha_{2}\;\ldots\;\alpha_{K})^{\prime}$.
The prior is structured as
\begin{align*}
\pi(\theta_{*}^{(K)}|M_{K}^{*}) & =\pi(\alpha^{(K)}|M_{K}^{*})\pi(\phi_{1}|M_{K}^{*})\pi(\phi_{2}|M_{K}^{*})\\
 & \times\pi(\sigma{}^{-2}|M_{K}^{*})\pi(\eta|M_{K}^{*})\mathbb{{I}}_{R_{K}^{*}}\{\zeta_{K}^{*}(\theta_{*}^{(K)})\},
\end{align*}
where, according to the argumentation presented above, the regularity
restriction assumes the form of the one derived for the single-component
model: $\zeta_{K}^{*}(\theta_{*}^{(K)}):=\rho(\Phi)$ and $R_{K}^{*}=R_{1}=[0,1)$.
Assuming equal hyperparameters for $\alpha^{(K)}$'s prior, in order
to establish such a prior structure under $M_{K}^{*}$ that is coherent
with that of $M_{1}$ we set (\ref{eq:Prior_alfa_MK}) for $\pi(\alpha^{(K)}|M_{K}^{*})$,
and (\ref{eq:Prior_fi1_M1})-(\ref{eq:Prior_sigma_M1}) for $\pi(\phi_{1}|M_{K}^{*})$,
$\pi(\phi_{2}|M_{K}^{*})$ and $\pi(\sigma^{-2}|M_{K}^{*})$, respectively.
Notice that if, in addition to that, the density $\pi(\eta|M_{K}^{*})$
coincides with (\ref{eq:Prior_eta_MK}), then the prior structure
of $M_{K}^{*}$ is also coherent with the one specified under the
general model, $M_{K}$.

\bibliographystyle{chicago}
\bibliography{references}

\begin{thebibliography}{}

\bibitem[\protect\citeauthoryear{Consonni and Veronese}{Consonni and
  Veronese}{2008}]{ConsonniVeronese:2008:PriorCompatibility}
Consonni, G. and P.~Veronese (2008).
\newblock Compatibility of prior specifications across linear models.
\newblock {\em Statistical Science\/}~{\em 23\/}(3), 332--353.

\bibitem[\protect\citeauthoryear{Dawid and Lauritzen}{Dawid and
  Lauritzen}{2001}]{DawidLauritzen:2001:PriorCompatibility}
Dawid, A. and S.~Lauritzen (2001).
\newblock Compatible prior distributions.
\newblock In E.~George (Ed.), {\em Bayesian Methods with Applications to
  Science, Policy and Official Statistics (Selected Papers from ISBA 2000)},
  Monographs of Official Statistics. Eurostat 2001.

\bibitem[\protect\citeauthoryear{Dickey}{Dickey}{1974}]{Dickey:1974:PriorCoherence}
Dickey, J. (1974).
\newblock Bayesian alternatives to the {F} test and the least squares estimate
  in the normal linear model.
\newblock In S.~Fienberg and A.~Zellner (Eds.), {\em Studies in Bayesian
  Econometrics and Statistics}. North-Holland, Amsterdam.

\bibitem[\protect\citeauthoryear{Francq and Zako{\"i}an}{Francq and
  Zako{\"i}an}{2001}]{FrancqZakoian:2001:MSVAR}
Francq, C. and J.-M. Zako{\"i}an (2001).
\newblock Stationarity of multivariate {M}arkov-switching {ARMA} models.
\newblock {\em Journal of Econometrics\/}~{\em 102}, 339--364.

\bibitem[\protect\citeauthoryear{Fr{\"u}hwirth-Schnatter}{Fr{\"u}hwirth-Schnatter}{2006}]{FruhwirthSchnatter:2006MixtureAndMS}
Fr{\"u}hwirth-Schnatter, S. (2006).
\newblock {\em Finite Mixture and {M}arkov Switching Models}.
\newblock Springer Series in Statistics. Springer.

\bibitem[\protect\citeauthoryear{Jasra, Holmes, and Stephens}{Jasra
  et~al.}{2005}]{JasraHolmesStephens:2005:LabelSwitching}
Jasra, A., C.~Holmes, and D.~Stephens (2005).
\newblock {M}arkov {C}hain {M}onte {C}arlo methods and the label switching
  problem in {B}ayesian mixture modeling.
\newblock {\em Statistical Science\/}~{\em 20\/}(1), 50--67.

\bibitem[\protect\citeauthoryear{Krolzig}{Krolzig}{1997}]{Krolzig:1997MSVAR}
Krolzig, H.-M. (1997).
\newblock {\em Markov-Switching Vector Autoregressions: Modelling, Statistical
  Inference, and Application to Business Cycle Analysis}.
\newblock Lecture Notes in Economics and Mathematical Systems. New
  York/Berlin/Heidelberg: Springer.

\bibitem[\protect\citeauthoryear{Marin, Mengersen, and Robert}{Marin
  et~al.}{2005}]{MarinMengersenRobert:2005:BayesianMixtures}
Marin, J.-M., K.~Mengersen, and C.~Robert (2005).
\newblock Bayesian modelling and inference on mixtures of distributions.
\newblock In C.~Rao and D.~Dey (Eds.), {\em Handbook of Statistics: Volume 25}.
  North-Holland.

\bibitem[\protect\citeauthoryear{Poirier}{Poirier}{1985}]{Poirier:1985:PriorCoherence}
Poirier, D. (1985).
\newblock Bayesian hypothesis testing in linear models with continuously
  induced conjugate priors across hypotheses.
\newblock In J.~Bernardo, M.~DeGroot, D.~Lindley, and A.~Smith (Eds.), {\em
  Bayesian Statistics 2}. North-Holland, Amsterdam.

\bibitem[\protect\citeauthoryear{Roeder and Wasserman}{Roeder and
  Wasserman}{1997}]{RoederWasserman:1997:MixtureOfNormals}
Roeder, K. and L.~Wasserman (1997).
\newblock Practical bayesian density estimation using mixtures of normals.
\newblock {\em Journal of the American Statistical Association\/}~{\em
  92\/}(439), 894--902.

\bibitem[\protect\citeauthoryear{Yao}{Yao}{2012a}]{Yao:2012a:LabelSwitching1}
Yao, W. (2012a).
\newblock Bayesian mixture labeling and clustering.
\newblock {\em Communications in Statistics-Theory and Methods\/}~{\em
  41\/}(3), 403--421.

\bibitem[\protect\citeauthoryear{Yao}{Yao}{2012b}]{Yao:2012b:LabelSwitching2}
Yao, W. (2012b).
\newblock Model based labeling for mixture models.
\newblock {\em Statistics and Computing\/}~{\em 22\/}(2), 337--347.

\end{thebibliography}

\subsubsection*{Acknowledgements}

The research was realized within a project financed by the National
Science Center (Poland) under decision No. DEC-2011/01/N/HS4/03105.\pagebreak{}

\appendix

\section{Appendix: Proof of Proposition 1}

Invoking Lemma \ref{lem:Lemma1} and performing some simple manipulations,
the proof proceeds as follows:
\begin{align*}
\pi_{\underline{\lambda_{1}}}(\lambda_{1}|M_{1}) & \propto\overset{K}{\underset{i=1}{\prod}}\pi_{\underline{\lambda_{i}}}(\lambda_{1}|M_{K})=\overset{K}{\underset{i=1}{\prod}}f_{N}^{(1)}(\lambda_{1}|m_{i}^{(K)},v_{i}^{(K)})\\
 & \propto\exp\left\{ -\frac{1}{2}\overset{K}{\underset{i=1}{\sum}}\frac{(\lambda_{1}-m_{i}^{(K)})^{2}}{v_{i}^{(K)}}\right\} \\
 & \propto\exp\left\{ -\frac{1}{2}\overset{K}{\underset{i=1}{\sum}}\left(\frac{1}{v_{i}^{(K)}}\lambda_{1}^{2}-2\frac{m_{i}^{(K)}}{v_{i}^{(K)}}\lambda_{1}\right)\right\} \\
 & \propto\exp\left\{ -\frac{1}{2}\left(\overset{K}{\underset{i=1}{\sum}}\frac{1}{v_{i}^{(K)}}\right)\left(\lambda_{1}^{2}-2\lambda_{1}\frac{\overset{K}{\underset{i=1}{\sum}}\frac{m_{i}^{(K)}}{v_{i}^{(K)}}}{\overset{K}{\underset{i=1}{\sum}}\frac{1}{v_{i}^{(K)}}}\right)\right\} \\
 & \propto\exp\left\{ -\frac{1}{2\left(\overset{K}{\underset{i=1}{\sum}}\frac{1}{v_{i}^{(K)}}\right)^{-1}}\left(\lambda_{1}-\frac{\overset{K}{\underset{i=1}{\sum}}\frac{m_{i}^{(K)}}{v_{i}^{(K)}}}{\overset{K}{\underset{i=1}{\sum}}\frac{1}{v_{i}^{(K)}}}\right)^{2}\right\} \\
 & \propto f_{N}^{(1)}(\lambda_{1}|m^{(1)},v^{(1)}),
\end{align*}
with $m^{(1)}$ and $v^{(1)}$ given by (\ref{eq:Prop1_m-variance})
and (\ref{eq:Prop1_v-variance}), respectively. The proof for the
precision-parametrized normal densities follows analogously.

\pagebreak{}

\section{Appendix: Proof of Proposition 2}

The proof is analogous to the one presented for Proposition \ref{prop:Prop1}:
\begin{align*}
\pi_{\underline{\lambda_{1}}}(\lambda_{1}|M_{1}) & \propto\overset{K}{\underset{i=1}{\prod}}\pi_{\underline{\lambda_{i}}}(\lambda_{1}|M_{K})=\overset{K}{\underset{i=1}{\prod}}f_{IG}(\lambda_{1}|a_{i}^{(K)},b_{i}^{(K)})\\
 & \propto\left[\overset{K}{\underset{i=1}{\prod}}(\lambda_{1})^{-(a_{i}^{(K)}+1)}\right]\exp\left\{ -\frac{1}{\lambda_{1}}\overset{K}{\underset{i=1}{\sum}}\frac{1}{b_{i}^{(K)}}\right\} \\
 & =(\lambda_{1})^{-\left(\overset{K}{\underset{i=1}{\sum}}a_{i}^{(K)}+K-1+1\right)}\exp\left\{ -1\left/\lambda_{1}\left(\overset{K}{\underset{i=1}{\sum}}\frac{1}{b_{i}^{(K)}}\right)^{-1}\right.\right\} \\
 & \propto f_{IG}(\lambda_{1}|a^{(1)},b^{(1)}),
\end{align*}
with $a^{(1)}$ and $b^{(1)}$ given by (\ref{eq:Prop2_a}) and (\ref{eq:Prop2_b}),
respectively.

\pagebreak{}

\section{Appendix: Proof of Proposition 3}

We proceed analogously to the proofs of Propostions \ref{prop:Prop1}
and \ref{prop:Prop2}:
\begin{align*}
\pi_{\underline{\lambda_{1}}}(\lambda_{1}|M_{1}) & \propto\overset{K}{\underset{i=1}{\prod}}\pi_{\underline{\lambda_{i}}}(\lambda_{1}|M_{K})=\overset{K}{\underset{i=1}{\prod}}f_{G}(\lambda_{1}|\breve{a}_{i}^{(K)},\breve{b}_{i}^{(K)})\\
 & \propto\left[\overset{K}{\underset{i=1}{\prod}}(\lambda_{1})^{\breve{a}_{i}^{(K)}-1}\right]\exp\left\{ -\lambda_{1}\overset{K}{\underset{i=1}{\sum}}\breve{b}_{i}^{(K)}\right\} \\
 & =(\lambda_{1})^{\overset{K}{\underset{i=1}{\sum}}\breve{a}_{i}^{(K)}-K+1-1}\exp\left\{ -\lambda_{1}\overset{K}{\underset{i=1}{\sum}}\breve{b}_{i}^{(K)}\right\} \\
 & \propto f_{G}(\lambda_{1}|\breve{a}^{(1)},\breve{b}^{(1)}),
\end{align*}
with $\breve{a}^{(1)}$ and $\breve{b}^{(1)}$ given by (\ref{eq:Prop3_a})
and (\ref{eq:Prop3_b}), respectively.
\end{document}